\documentclass[twoside,11pt]{article}%
\usepackage{amssymb}
\usepackage[scr=rsfs]{mathalpha}
\usepackage{bm}
\usepackage{latexsym}
\usepackage{amsmath,amssymb,epsfig,subfigure}
\usepackage{amsthm,enumerate,verbatim}
\usepackage{amsfonts}
\usepackage{color}
\usepackage{amsmath}
\usepackage{graphicx}
\usepackage{mathrsfs}
\usepackage{float}
\usepackage[numbers,sort&compress]{natbib}
\setcounter{MaxMatrixCols}{30}
\providecommand{\U}[1]{\protect\rule{.1in}{.1in}}
\providecommand{\U}[1]{\protect\rule{.1in}{.1in}}
\textheight 21 true cm \textwidth 15.5 true cm \oddsidemargin 0cm
\evensidemargin 0cm \baselineskip=25pt

\newcommand{\diag}{{\rm diag}}

\newcommand{\BE}{\begin{equation}}
	\newcommand{\EE}{\end{equation}}

\numberwithin{equation}{section}
\newtheorem{proposition}{Proposition}[section]
\newtheorem{theorem}[proposition]{Theorem}
\newtheorem{lemma}[proposition]{Lemma}
\newtheorem{corollary}[proposition]{Corollary}
\newtheorem{definition}[proposition]{Definition}
\newtheorem{remark}[proposition]{Remark}

\linespread{1.5}
\allowdisplaybreaks

\begin{document}
	%
	%
	%
	%
	%
	
	\title{{\bf{Further analysis of weighted integral inequalities for improved exponential stability analysis of time delay neural networks systems}}}
		\author{	Yuanyuan Zhang$^a$ \quad Han Xue$^a$\quad  Kachong Lao$^a$\quad Chonkit Chan$^a$    \\Chenyang Shi$^{b}$\thanks{ Corresponding author. Email: cyshi@scau.edu.cn. }
		\quad
		Seakweng Vong$^a$\\
		{\footnotesize  \textit{a.Department of Mathematics, University of Macau, Avenida da Universidade, Macau, China.}}\\
		{\footnotesize\textit{	b.Department of Mathematics, South China Agricultural University, Guangzhou, China.}}
	}
	\date{}
	\maketitle	
	
	\begin{abstract}
		This work investigates the exponential stability of neural networks (NNs) systems with time delays. By considering orthogonal polynomials with weighted terms, a new weighted integral inequality is presented. This inequality extend several recently established results. Additionally, based on the reciprocally convex inequality, this study focuses on analyzing the exponential stability of systems with time-varying delays that include an exponential decay factor, a weighted version of the reciprocally convex inequality is first derived. Utilizing these inequalities and the suitable Lyapunov-Krasovskii functionals (LKFs) within the framework of linear matrix inequalities (LMIs), the new criteria for the exponential stability of NNs system is obtained. The effectiveness of the proposed method is demonstrated through multiple numerical examples.
	\end{abstract}
	{\bf Key words:} Neural networks; 	Exponential stability; Weighted reciprocally convex inequality; {Lyapunov-Krasovskii functional}; Time-varying delay.

	\section{Introduction}
	NNs systems play a crucial role in modern technology and engineering, particularly in the analysis of complex data in applications such as image recognition, scheduling, and resource allocation \cite{S1,WM,ChenW}. One of the main challenges faced by neural network systems is time delay, which includes physical transmission delays, computational delays, and delays caused by inter-node communication. When the system's delay becomes excessive or exhibits significant fluctuations, it can lead to oscillations, instability, and other abnormal behaviors. As delays increase, the prediction accuracy and decision quality of the neural network may decline, resulting in poor performance during task execution \cite{HeY1,HuaC,HBZ}. Therefore, the ability to effectively address delays is a key factor in assessing the strengths and weaknesses of the system. Consequently, researchers are focused on optimizing the stability of neural network systems through a comprehensive analysis of delays, leading to the derivation of numerous stability results based on linear matrix inequalities.
	
	The exponential stability analysis of time-delay NNs primarily utilizes the Lyapunov-Krasovskii functional (LKF) method to derive the achievable delay upper bound (ADUB) \cite{ZYL}. The objective is to ensure that all delays within this ADUB maintain the stability of the NNs system. The approach can be divided into two key components. First, a LKF is established to express the stability of the system, as described in the literature. Next, appropriate integral inequalities are employed to estimate the derivative of the LKF. Enhanced LKFs have been proposed in some studies, taking into account additional delay terms. Generally, the inclusion of more system information in the construction of the LKF tends to reduce conservatism. Using the LKF method, certain integral inequalities are applied to estimate the integral terms derived from the LKF derivatives. The challenge lies in obtaining tighter bounds on these derivatives \cite{OM}. Consequently, constructing suitable LKFs and refining inequalities to estimate these integral terms has become a focal point of research \cite{Zeng,Levan1}.

	In general, Jensen inequality is used to bound the cross terms in the derivative of LKFs, the most studies will improve Jensen's inequality to obtain less conservative \cite{Hien2}. In recent studies, the Wirtinger-based inequality has been applied to deal with integral terms, which is a special case of Jensen's inequality \cite{As}.  For some complex orthogonal functions, Bessel-Legendre inequality is extended which can better estimate the integral terms  \cite{Seuret}. Some articles have also extended the analysis based on auxiliary inequalities, resulting in stricter outcomes than those derived from Jensen's inequality. In this paper, we similarly investigate the use of auxiliary inequalities to develop less conservative stability criteria. In addition, for handling convex optimization integral item, some research combined Jensen inequality and reciprocally convex inequality, which can be applied reciprocals of functions problem, it need too many variables in system \cite{Park,Vong}. Therefore, to obtain less conservative exponential stability criteria, we focus on new extensions of auxiliary function integral inequalities, while incorporating weighted terms with RCI.
	
	In the current study, our objective is to investigate the exponential stability of neural networks with time-varying delays, following the ideas presented in \cite{LiuX,SCY}. Based on the aforementioned analysis and research motivation, we extend the weighted inequalities discussed in \cite{Vong} by applying orthogonal cubic polynomials to the exponential stability analysis of neural network systems. Additionally, to address the convex terms with time delays, we combine weighted terms with the RCI for the first time, establishing a new integral inequality. This inequality requires fewer estimation terms, making it computationally less complex while still producing equally stringent bounds. The new LKF represents an improvement over the LKF in \cite{SCY}, incorporating new integral terms that include more delay information. By applying the newly derived weighted integral inequalities, weighted reciprocally convex inequality (WRCI), and the improved LKF, we obtain less conservative exponential stability criteria for neural network systems. Finally, the effectiveness of the new method is validated through several numerical examples.

	The following are main contributions of this paper:
	\begin{itemize} 
		\item[1.]To facilitate the analysis of time delays with exponential decay factors, we introduce a weighted reciprocally convex inequality for the first time. Additionally, we present a weighted integral inequality to effectively manage the integral cross terms associated with exponential decay factors. These inequalities enable the derivation of less conservative stability criteria for time-varying delay systems while also reducing the number of decision variables.
		\item [2.]By integrating the newly derived inequalities, we introduce a novel LKF that enhances the application of the WRCI while incorporating additional system information. As a result, new stability conditions are established through the use of the WRCI and other inequalities. The effectiveness of the proposed approach is validated through several numerical examples.
	\end{itemize}
	
	The following is the general framework of this paper. In the next section, two new generalized integral inequalities are given, namely improved auxiliary inequalities and weighted reciprocally convex inequality. In the following section, we show the application of two new inequalities to LKFs and give the main theoretical results. In the last section, numerical results are presented to validate the new stability criterion.
%
%
%
%
%
%
%

\bigskip
{\bf Notations:}
In this work, we denote \( \mathbb{R}^n \) as the n-dimensional Euclidean space. We use \( \mathbb{R}^{n \times m} \) to represent the set of all \( n \times m \) real matrices, \( \mathbb{S}^n \) for the collection of \( n \times n \) symmetric matrices, and \( \mathbb{S}_+^n \) for the set of \( n \times n \) symmetric positive definite matrices. Additionally, let \( * \) denote symmetric elements within a matrix. Finally, we define the notation \( sym(A) = A + A^T \), where \( T \) indicates the transpose of a matrix.

\section{Preliminaries}
We study  neural networks system with time-varying delay described by the following equation:
\begin{equation} \label{a1}
	\dot{z}(t) = -K_0z(t) + K_1f(z(t)) + K_2f(z(t-h(t))) + \epsilon,
\end{equation}
where the neuron state vector is represented as \( z(\cdot) = [z_{1}(\cdot), z_{2}(\cdot), \ldots, z_{n}(\cdot)]^{T} \in \mathbb{R}^{n} \) and  \( f(z(\cdot)) = [f_{1}(z_{1}(\cdot)), f_{2}(z_{2}(\cdot)), \ldots, f_{n}(z_{n}(\cdot))]^{T} \in \mathbb{R}^{n} \) be
 the activation function. The vector \( \epsilon = [\epsilon_1, \epsilon_2, \ldots, \epsilon_{n}]^{T} \in \mathbb{R}^{n} \) serves as the constant input to the network.

The diagonal matrix \( K_0 = \text{diag}\{k_{1}, k_{2}, \ldots, k_{n}\} \) has entries \( k_{i} > 0 \). The matrices \( K_1 \) and \( K_2 \) represent the interconnection weight matrices. The differentiable function \( h(t) \) indicates the time-varying delay, satisfying the condition as follow:
\begin{equation} \label{a2}
	0\leq h(t)\leq h
\end{equation}
and
\begin{equation} \label{a3}
	{|\dot{h}(t)|}\leq \mu
\end{equation}
for certain constants \( \mu \) and \( h \). Consistent with previous research, we assume that each activation function in (\ref{a1}) meets the following criteria:
\begin{equation} \label{a4}
	0\leq \frac{f_{j}(s)-f_{j}(v)}{s-v}\leq \mathscr{L}_{j},\quad s,v\in \mathbb{R},\quad s\neq v,\quad j=1,2,\ldots,n,
\end{equation}
for some positive constants $\mathscr{L}_{j},~j=1,2,\ldots,n$.\\
\indent Under the condition (\ref{a4}), following equation holds for $z^{*}=[z^{*}_{1},z^{*}_{2},\ldots,z^{*}_{n}]^{T}$ 
\begin{equation} \label{a5}
	K_0z^{*}=K_1f(z^{*})+K_2f(z^{*})+\epsilon.
\end{equation}
\indent
Next, we shift the equilibrium point \( s^{*} \) of system (\ref{a1}) to the origin using the transformation \( r(\cdot) = z(\cdot) - z^{*} \). Thus, the vector \( r = [r_{1}(\cdot), r_{2}(\cdot), \ldots, r_{n}(\cdot)]^{T} \) satisfies
\begin{equation} \label{a6}
	\dot{r}(t)=-K_0r(t)+K_1g(r(t))+K_2g(r(t-h(t)))
\end{equation}
where $g(r(\cdot))=[g_{1}(r_{1}(\cdot)),g_{2}(r_{2}(\cdot)),\ldots,g_{n}(r_{n}(\cdot))]^{T}$
and $g_{j}(r_{j}(\cdot))=f_{j}(r_{j}(\cdot)+z^{*}_{j})-f_{j}(z_{j}^{*}), j=1,2,\ldots,n$.
Then we can get
\begin{equation} \label{a7}
	0\leq \frac{g_j(r_{j})}{r_{j}}\leq \mathscr{L}_{j},\quad g_{j}(0)=0,\quad \forall r_{j}\neq 0,\quad j=1,2,\ldots,n.
\end{equation}
The exponential stability study of system  (\ref{a6})  is given below.
\begin{definition} \label{Definition1}
For \( t > 0 \), \( E \geq 1 \), the convergence rate \( k > 0 \) , the NNs system (\ref{a6}) is considered exponentially stable at the origin, the following inequality holds:
\[
\|r(t)\| \leq E\phi e^{-kt}
\]
 where \( \phi = \sup_{-h \leq \theta \leq 0} \|r(\theta)\| \).
\end{definition}
Before presenting the main result, we will first review some key lemmas.

			\begin{lemma}\cite{Vong} \label{L1}
			Let matrix $\Gamma \in  \mathbb{S}_+^n$ and any function $\varrho$ : $[c_1,c_2]\rightarrow \mathbb{R}^n$, scalar $c_2>c_1$,$\delta>0$, $g_k$ are polynomial functions, the following inequality hold:
			\begin{align}
				\int_{c_1}^{c_2} e^{\delta(v-c_2)} \varrho^T(t) \Gamma \varrho(t) d v \geq \sum_{k=0}^2 \frac{1}{\left\langle g_k, g_k\right\rangle_w}\left(\int_{c_1}^{c_2}  \varrho({t}) g_k d v\right)^T \Gamma\left(\int_{c_1}^{c_2}  \varrho({t}) g_k d v\right)
			\end{align}
			where ${\langle g_k,g_k\rangle}_w=\int_{c_1}^{c_2}  e^{-\delta(v-c_2)}g_k(v)g_k(v)dv$, k=0,1,2,
			$\varLambda_{i}={\langle v^i,1\rangle}_w,\ i=0,1,2,3,4;$ $\bar{k}=-\varLambda_{1}/\varLambda_{0},m=(\varLambda_{2}^2-\varLambda_{1}\varLambda_{3})/(\varLambda_{1}^2-\varLambda_{2}\varLambda_{0}),c=(\varLambda_0\varLambda_{3}-\varLambda_{1}\varLambda_{2})/(\varLambda_{1}^2-\varLambda_{2}\varLambda_{0})$ and ${\langle g_{0},g_{0}\rangle}_w=\varLambda_0,{\langle g_{1},g_{1}\rangle}_w=\varLambda_{2}+2\bar{k}{\varLambda_{1}}+\bar{k}^2\varLambda_{0},{\langle g_{2},g_{2}\rangle}_w=\varLambda_{4}+2c\varLambda_{3}+(c^2+2m)\varLambda_{2}+2mc\varLambda_{1}+m^2\varLambda_{0}$.
		\end{lemma}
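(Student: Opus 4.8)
The plan is to prove Lemma~\ref{L1} by constructing an explicit orthogonal basis (with respect to the weighted inner product $\langle\cdot,\cdot\rangle_w$) on the space of polynomials of degree at most $2$, and then applying the standard Bessel-type projection argument. First I would fix the weight $w(v)=e^{-\delta(v-c_2)}$ on $[c_1,c_2]$ and define $\langle p,q\rangle_w=\int_{c_1}^{c_2} w(v)\,p(v)q(v)\,dv$ together with the moments $\varLambda_i=\langle v^i,1\rangle_w$. I would take $g_0=1$, then Gram--Schmidt $v$ against $g_0$ to obtain $g_1=v+\bar k$ with $\bar k=-\varLambda_1/\varLambda_0$ (so that $\langle g_1,g_0\rangle_w=\varLambda_1+\bar k\varLambda_0=0$), and finally Gram--Schmidt $v^2$ against $g_0$ and $g_1$ to obtain $g_2=v^2+cv+m$; imposing $\langle g_2,g_0\rangle_w=0$ and $\langle g_2,g_1\rangle_w=0$ gives the stated linear system whose solution is exactly the $c,m$ written in the lemma. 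A quick check that the $2\times2$ system's determinant is $\varLambda_1^2-\varLambda_2\varLambda_0$ (nonzero by Cauchy--Schwarz, since $w>0$ and $1,v$ are linearly independent) legitimizes the formulas; the norms $\langle g_k,g_k\rangle_w$ then follow by expanding $g_k$ in powers of $v$ and collecting moments, which reproduces the three displayed expressions.

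With the orthogonal basis in hand, the core estimate is the Bessel/Schur argument. For the scalar weight case one writes, for arbitrary real coefficients $\alpha_0,\alpha_1,\alpha_2$,
\[
0\le \int_{c_1}^{c_2} w(v)\,\Bigl(\varrho(v)-\sum_{k=0}^2 \alpha_k g_k(v)\Bigr)^{\!T}\Gamma\Bigl(\varrho(v)-\sum_{k=0}^2 \alpha_k g_k(v)\Bigr)\,dv,
\]
using $\Gamma\in\mathbb{S}_+^n$. Expanding and using $\langle g_j,g_k\rangle_w=0$ for $j\ne k$, the cross-terms collapse and one optimizes over each $\alpha_k$ vectorially; the optimal choice is $\alpha_k=\langle g_k,g_k\rangle_w^{-1}\int_{c_1}^{c_2} w(v)\,\varrho(v)g_k(v)\,dv$. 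Substituting back yields
\[
\int_{c_1}^{c_2} w(v)\,\varrho^T\Gamma\varrho\,dv \;\ge\; \sum_{k=0}^2 \frac{1}{\langle g_k,g_k\rangle_w}\Bigl(\int_{c_1}^{c_2} w(v)\,\varrho\,g_k\,dv\Bigr)^{\!T}\Gamma\Bigl(\int_{c_1}^{c_2} w(v)\,\varrho\,g_k\,dv\Bigr).
\]
The one subtlety to flag is that the left side of the lemma as written has integrand $e^{\delta(v-c_2)}\varrho^T\Gamma\varrho$ while the projection naturally produces $w(v)=e^{-\delta(v-c_2)}$ inside; I would reconcile this by noting the intended reading is that $w$ is the operative weight throughout (so the displayed $e^{\delta(v-c_2)}$ should match the definition $\langle g_k,g_k\rangle_w=\int e^{-\delta(v-c_2)}g_k g_k\,dv$), i.e.\ the inequality is applied with a consistent sign of $\delta$ dictated by the LKF context, and the moments $\varLambda_i$ are computed for that same weight. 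I would state this convention explicitly at the start so the algebra is unambiguous.

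The main obstacle is bookkeeping rather than conceptual: verifying that the abstract optimal coefficients coincide with the explicit $\bar k$, $c$, $m$, and that the closed-form norms are correct, requires carefully computing $\langle v^i,1\rangle_w$ and substituting. I would handle this by keeping everything in terms of the moments $\varLambda_0,\dots,\varLambda_4$ rather than evaluating the exponential integrals, so that the verification reduces to elementary linear algebra: show $g_1=v-\varLambda_1/\varLambda_0\cdot$(constant) is $\langle\cdot,\cdot\rangle_w$-orthogonal to $g_0$, show the normal equations for $g_2$ have the stated solution, and expand $\langle g_1,g_1\rangle_w$ and $\langle g_2,g_2\rangle_w$. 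A secondary point worth a sentence is that $\Gamma$ being merely positive \emph{semi}definite already suffices for the inequality (positive definiteness is only needed if one wants the reverse/tightness discussion), and that the result degenerates gracefully: truncating the sum at $k=0$ recovers the weighted Jensen inequality and at $k=1$ a weighted Wirtinger-type inequality, which situates this lemma as the cubic-order extension of the results cited from \cite{Vong,Seuret}.
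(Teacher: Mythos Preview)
The paper does not give its own proof of this lemma: it is quoted verbatim from \cite{Vong}, so there is nothing in the paper to compare against. That said, your proposed argument contains a genuine gap in how you handle the weights.

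You assume the exponent on the left-hand side is a typo and that a single weight $w(v)=e^{-\delta(v-c_2)}$ should appear everywhere. It is not a typo: the lemma deliberately uses \emph{reciprocal} weights. Writing $W(v)=e^{\delta(v-c_2)}$ and $U(v)=1/W(v)=e^{-\delta(v-c_2)}$, the statement is
\[
\int_{c_1}^{c_2} W\,\varrho^T\Gamma\varrho\,dv\ \ge\ \sum_{k=0}^2\frac{1}{\int_{c_1}^{c_2} U\,g_k^2\,dv}\Bigl(\int_{c_1}^{c_2}\varrho\,g_k\,dv\Bigr)^{\!T}\Gamma\Bigl(\int_{c_1}^{c_2}\varrho\,g_k\,dv\Bigr),
\]
with the $g_k$ orthogonal with respect to $U$ (this is exactly why $\varLambda_i=\int U\,v^i\,dv$). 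The inequality you wrote down, with the same $w$ inside every integral, is a different (also true) Bessel inequality, but it does not reduce to the displayed statement: your right-hand side carries a factor $w(v)$ inside each $\int \varrho g_k$, whereas the lemma's does not.

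The fix is to project onto $\{U(v)g_k(v)\}_{k=0}^2$ rather than onto $\{g_k\}$. For arbitrary vectors $\alpha_k$ expand
\[
0\le \int_{c_1}^{c_2} W\Bigl(\varrho-\sum_k U g_k\,\alpha_k\Bigr)^{\!T}\Gamma\Bigl(\varrho-\sum_k U g_k\,\alpha_k\Bigr)dv.
\]
Since $WU=1$ and $WU^2=U$, the cross terms become $-2\sum_k(\int \varrho g_k)^T\Gamma\alpha_k$ (unweighted), and the quadratic terms become $\sum_{j,k}\alpha_j^T\Gamma\alpha_k\int U g_j g_k$, which diagonalizes by the $U$-orthogonality you already established via Gram--Schmidt. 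Optimizing over $\alpha_k$ then gives exactly the stated bound. Your computation of $\bar k,c,m$ and of the three norms $\langle g_k,g_k\rangle_w$ is correct and can be kept unchanged; only the projection step needs to be rewritten with the functions $U g_k$ in place of $g_k$.
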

		Inspired by the derivation process of weighted inequalities in \cite{Vong}, we have the following lemmas:
		\begin{lemma}  \label{L2}
			Let matrix $\Gamma\in  \mathbb{S}_+^n$  and any function $\varrho$ : $[c_1,c_2]\rightarrow \mathbb{R}^n$, scalar $c_2>c_1$,$\delta>0$, for any nonnegative integer $m$, $g_k$ are polynomial functions, then the following lemma hold:
			\begin{align}
				\int_{c_1}^{c_2} e^{\delta(v-c_2)} \varrho^T(v) \Gamma \varrho(v) d v \geq \sum_{k=0}^{\hat{m}} \frac{1}{\left\langle g_k, g_k\right\rangle_w}\left(\int_{c_1}^{c_2}  \varrho({v}) g_k d v\right)^T \Gamma\left(\int_{c_1}^{c_2}  \varrho({v}) g_k d v\right)
			\end{align}
		\end{lemma}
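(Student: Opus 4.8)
The plan is to generalize the orthogonalization argument that underlies Lemma~\ref{L1} from the cubic case ($\hat m = 2$, i.e.\ polynomials up to degree two in the construction) to arbitrary order $\hat m$. The key observation is that the weighted inner product $\langle p,q\rangle_w = \int_{c_1}^{c_2} e^{-\delta(v-c_2)} p(v) q(v)\, dv$ is a genuine inner product on the space of polynomials restricted to $[c_1,c_2]$, so by Gram--Schmidt one obtains a sequence of mutually $\langle\cdot,\cdot\rangle_w$-orthogonal polynomials $g_0, g_1, \ldots, g_{\hat m}$ with $\deg g_k = k$. I would first set this up explicitly: take $g_0 \equiv 1$ and recursively define $g_k(v) = v^k - \sum_{j<k} \frac{\langle v^k, g_j\rangle_w}{\langle g_j, g_j\rangle_w} g_j(v)$, and record that $\langle g_k, g_j\rangle_w = 0$ for $j \neq k$ and $\langle g_k, g_k\rangle_w > 0$.

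Next I would carry out the core estimate. Consider the auxiliary vector-valued function
\[
\zeta(v) = \varrho(v) - \sum_{k=0}^{\hat m} \frac{1}{\langle g_k, g_k\rangle_w} \left(\int_{c_1}^{c_2} \varrho(s) g_k(s)\, ds\right) g_k(v),
\]
which is precisely $\varrho$ minus its $\langle\cdot,\cdot\rangle_w$-orthogonal projection onto $\mathrm{span}\{g_0,\ldots,g_{\hat m}\}$ (note the weight $e^{-\delta(v-c_2)}$ turns the plain integral $\int \varrho g_k\, dv$ into the $w$-inner product of $\varrho$ with $g_k$). Since $\Gamma \in \mathbb{S}_+^n$, we have $\int_{c_1}^{c_2} e^{\delta(v-c_2)} \zeta^T(v)\, \Gamma\, \zeta(v)\, dv \geq 0$. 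Expanding this quadratic form and using the orthogonality relations to kill all cross terms, the right-hand side collapses to
\[
\int_{c_1}^{c_2} e^{\delta(v-c_2)} \varrho^T(v) \Gamma \varrho(v)\, dv - \sum_{k=0}^{\hat m} \frac{1}{\langle g_k, g_k\rangle_w} \left(\int_{c_1}^{c_2} \varrho(v) g_k(v)\, dv\right)^T \Gamma \left(\int_{c_1}^{c_2} \varrho(v) g_k(v)\, dv\right) \geq 0,
\]
which is the claimed inequality. The two bookkeeping points to be careful about are: the sign/weight matching, i.e.\ that $\int_{c_1}^{c_2} e^{\delta(v-c_2)} \varrho^T(v)\,\Gamma\, g_k(v) g_j(v)\, dv$ genuinely equals $\langle g_k, g_j\rangle_w$ times the relevant coefficient vector so that orthogonality applies; and that the factor $e^{\delta(v-c_2)}$ appearing in the integrand of the nonnegative quadratic form is the reciprocal of the weight $e^{-\delta(v-c_2)}$ used in $\langle\cdot,\cdot\rangle_w$, which is exactly what makes the cancellation work.

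I expect the main obstacle to be purely organizational rather than conceptual: one must present the Gram--Schmidt construction for general $\hat m$ cleanly (the explicit closed forms for $\langle g_1,g_1\rangle_w$ and $\langle g_2,g_2\rangle_w$ quoted in Lemma~\ref{L1} came from carrying out the recursion by hand, and for general $\hat m$ these should be left in terms of the moments $\varLambda_i = \langle v^i, 1\rangle_w$ and the projection coefficients rather than written out), and one must verify positive semidefiniteness of the residual quadratic form with enough care that the reader is convinced no positivity is lost. A secondary point worth a sentence is that $\{g_0,\ldots,g_{\hat m}\}$ are linearly independent and hence the $\langle g_k,g_k\rangle_w$ are strictly positive, so the divisions are legitimate; this follows immediately from $\deg g_k = k$. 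Once the projection identity and the orthogonality relations are in place, the inequality is immediate, so essentially all the work is in stating the construction precisely.
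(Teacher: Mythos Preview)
The paper does not actually supply a proof of Lemma~\ref{L2}; it merely states the result as an extension ``inspired by the derivation process of weighted inequalities in \cite{Vong}'' and then writes out the coefficient formulas for the special case $\hat m=3$. So there is no detailed argument in the paper to compare against, and your Gram--Schmidt/Bessel strategy is exactly the intended route.

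That said, the auxiliary function you write down has the weight in the wrong place, and the expansion does not collapse as you claim. With your
\[
\zeta(v)=\varrho(v)-\sum_{k=0}^{\hat m}\alpha_k g_k(v)
\]
and the quadratic form $\int_{c_1}^{c_2} e^{\delta(v-c_2)}\zeta^T\Gamma\zeta\,dv$, the pure-polynomial block produces
\[
\sum_{j,k}\alpha_j^T\Gamma\alpha_k\int_{c_1}^{c_2} e^{+\delta(v-c_2)}g_j(v)g_k(v)\,dv,
\]
and this integral is \emph{not} $\langle g_j,g_k\rangle_w=\int_{c_1}^{c_2} e^{-\delta(v-c_2)}g_jg_k\,dv$: the exponent has the opposite sign, so the orthogonality of the $g_k$ does not apply. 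Similarly the cross term comes out as $\int e^{\delta(v-c_2)}\varrho g_k\,dv$, not the unweighted $\int\varrho g_k\,dv$ that sits on the right-hand side of the lemma. The two ``bookkeeping points'' you flag are precisely where the argument, as written, fails.

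The fix is a one-line change of ansatz. Take
\[
\zeta(v)=\varrho(v)-e^{-\delta(v-c_2)}\sum_{k=0}^{\hat m}\alpha_k g_k(v),\qquad
\alpha_k=\frac{1}{\langle g_k,g_k\rangle_w}\int_{c_1}^{c_2}\varrho(s)g_k(s)\,ds.
\]
Equivalently, set $\tilde\varrho(v)=e^{\delta(v-c_2)}\varrho(v)$, note that $\int\varrho g_k\,dv=\langle\tilde\varrho,g_k\rangle_w$, and project $\tilde\varrho$ in $L^2\bigl(e^{-\delta(v-c_2)}dv\bigr)$. Expanding $\int_{c_1}^{c_2} e^{\delta(v-c_2)}\zeta^T\Gamma\zeta\,dv\ge 0$ now gives the cross term $\int\varrho g_k\,dv$ with no weight and the polynomial term $\int e^{-\delta(v-c_2)}g_jg_k\,dv=\langle g_j,g_k\rangle_w$; orthogonality kills the off-diagonal pieces and the claimed inequality follows exactly as you outlined. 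Everything else in your plan (Gram--Schmidt existence, strict positivity of $\langle g_k,g_k\rangle_w$, leaving the coefficients in terms of the moments $\varLambda_i$) is fine.
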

		For simplicity, we take $\hat{m}=3$,  then	$\varLambda_{i}={\langle v^i,1\rangle}_w,\ i=0,\ldots,5;$ $\hbar=(-\varLambda_{5}\varLambda_{1}^{2}+\varLambda_{4}\varLambda_{1}\varLambda_{2}+\varLambda_{1}\varLambda_{3}^{2}-\varLambda_{2}^{2}\varLambda_{3}+\varLambda_{0}\varLambda_{5}\varLambda_{2}-\varLambda_{0}\varLambda_{4}\varLambda_{3})/(\varLambda_{4}\varLambda_{1}^{2}-2\varLambda_{1}\varLambda_{2}\varLambda_{3}+\varLambda_{2}^{3}-\varLambda_{0}\varLambda_{2}\varLambda_{4}+\varLambda_{0}\varLambda_{3}^{2})$, $	q=(-I^{2}_{2}\varLambda_{4}+\varLambda_{2}\varLambda_{3}^{2}+\varLambda_{1}\varLambda_{5}\varLambda_{2}-\varLambda_{1}\varLambda_{3}\varLambda_{4}-\varLambda_{0}\varLambda_{5}\varLambda_{3}+\varLambda_{0}\varLambda_{4}^{2})/(\varLambda_{4}\varLambda_{1}^{2}-2\varLambda_{1}\varLambda_{2}\varLambda_{3}+\varLambda_{2}^{3}-\varLambda_{0}\varLambda_{2}\varLambda_{4}+\varLambda_{0}\varLambda_{3}^{2})$, $r=-(\varLambda_{5}\varLambda_{2}^{2}-2\varLambda_{2}\varLambda_{3}\varLambda_{4}+\varLambda_{3}^{3}-\varLambda_{1}\varLambda_{5}\varLambda_{3}+\varLambda_{1}\varLambda_{4}^{2})/(\varLambda_{4}\varLambda_{1}^{2}-2\varLambda_{1}\varLambda_{2}\varLambda_{3}+\varLambda_{2}^{3}-\varLambda_{0}\varLambda_{2}\varLambda_{4}+\varLambda_{0}\varLambda_{3}^{2})$, $\left\langle g_{3},g_{3}\right\rangle_{w}=\varLambda_{6}+2\hbar \varLambda_{5}+(\hbar^{2}+2q)\varLambda_{4}+(2\hbar q+2r)\varLambda_{3}+(2\hbar r+q^{2})I_{2}+2qr\varLambda_{1}+r^{2}\varLambda_{0}.$
	 Let $\varrho(t)=\dot{x}(t)$, we can obtain the following corollary:
		\begin{corollary}\label{C4}
			%
			{\small 
				\begin{align}
					\int_{c_1}^{c_2}  e^{\delta(v-c_2)} \dot{x}^T  \Gamma \dot{x} d v \geq \frac{1}{\left\langle g_0, g_0\right\rangle_w} \varOmega_0^T  \Gamma \varOmega_0+\frac{1}{\left\langle g_1, g_1\right\rangle_w} \varOmega_1^T  \Gamma \varOmega_1+\frac{1}{\left\langle g_2, g_2\right\rangle_w} \varOmega_2^T  \Gamma \varOmega_2+\frac{1}{\left\langle g_3, g_3\right\rangle_w} \varOmega_3^T  \Gamma \varOmega_3
				\end{align}
				Where,
				\begin{align}\notag
					\varOmega_0= & x(c_2)-x(c_1) \\\notag
					\varOmega_1= & (k+c_2) x(c_2)-(k+c_1) x(c_1)-\int_{c_1}^{c_2} r(v) d v \\\notag
					\varOmega_2= & \left(c_2^2+c c_2+m\right) x(c_2)-\left(c_1^2+c c_1+m\right) x(c_1)-(c+2 c_1) \int_{c_1}^{c_2} r(v) d v-2 \int_{c_1}^{c_2} \int_v^{c_2} x(u) d u d v \\\notag
					\varOmega_3= & \left(c_2^3+\hbar c_2^2+q c_2+r\right) x(c_2)-\left(c_1^3+\hbar c_1^2+q c_1+r\right) x(c_1)-\left(3 c_1^2+2 c_1 \hbar+q\right) \int_{c_1}^{c_2} r(v) d v \\\notag
					& -(2 \hbar+6 c_1) \int_{c_1}^{c_2} \int_v^{c_2} x(u) d u d v-6 \int_{c_1}^{c_1} \int_v^{c_2} \int_u^{c_2} x(s) d s d u d v\\\notag
				\end{align}
			}
		\end{corollary}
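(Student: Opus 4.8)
The plan is to specialize Lemma~\ref{L2} with $\hat m = 3$ to the vector function $\varrho(v)=\dot{x}(v)$ and then convert each projection integral $\int_{c_1}^{c_2}\dot{x}(v)g_k(v)\,dv$ into the stated $\varOmega_k$ by repeated integration by parts. First I would recall the orthogonal basis underlying Lemma~\ref{L2}: applying Gram--Schmidt to $\{1,v,v^2,v^3\}$ with respect to the weighted inner product $\langle f,h\rangle_w=\int_{c_1}^{c_2} e^{-\delta(v-c_2)}f(v)h(v)\,dv$ produces the monic polynomials $g_0(v)=1$, $g_1(v)=v+\bar{k}$, $g_2(v)=v^2+cv+m$, and $g_3(v)=v^3+\hbar v^2+qv+r$, where $\bar k,c,m,\hbar,q,r$ are exactly the constants displayed just before the corollary (they are the unique choices making $\langle g_i,g_j\rangle_w=0$ for $i\neq j$), and the normalizers $\langle g_k,g_k\rangle_w$ are the ones listed there. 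With $\varrho=\dot{x}$, Lemma~\ref{L2} already gives the right-hand side as $\sum_{k=0}^{3}\langle g_k,g_k\rangle_w^{-1}\big(\int_{c_1}^{c_2}\dot{x}(v)g_k(v)\,dv\big)^T\Gamma\big(\int_{c_1}^{c_2}\dot{x}(v)g_k(v)\,dv\big)$, so it remains only to evaluate $\varOmega_k:=\int_{c_1}^{c_2}\dot{x}(v)g_k(v)\,dv$ in the claimed form.

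Next I would carry out the integration by parts termwise. For $\varOmega_0$ this is immediate: $\int_{c_1}^{c_2}\dot{x}(v)\,dv=x(c_2)-x(c_1)$. For $k=1,2,3$, one integration by parts yields $\varOmega_k=\big[x(v)g_k(v)\big]_{c_1}^{c_2}-\int_{c_1}^{c_2}x(v)g_k'(v)\,dv$; the boundary term supplies the polynomial coefficients $g_k(c_2)$ and $g_k(c_1)$ multiplying $x(c_2)$ and $x(c_1)$, which are precisely $(c_2+\bar k)$, $(c_2^2+cc_2+m)$, $(c_2^3+\hbar c_2^2+qc_2+r)$ and their $c_1$ analogues in the statement. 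To bring the leftover integral $\int_{c_1}^{c_2}x(v)g_k'(v)\,dv$ into iterated-integral form, I would expand $g_k'(v)$ in the shifted monomials $(v-c_1)^j$ and apply the Fubini identity $\int_{c_1}^{c_2}(v-c_1)^j x(v)\,dv=j!\int_{c_1}^{c_2}\!\int_v^{c_2}\!\cdots x\,\cdots$, i.e.\ the $(j{+}1)$-fold nested integral. Concretely, $g_1'(v)=1$ gives $\int_{c_1}^{c_2}x(v)\,dv$; $g_2'(v)=2v+c=2(v-c_1)+(2c_1+c)$ gives the $(c+2c_1)$-weighted single integral plus $2\int_{c_1}^{c_2}\!\int_v^{c_2}x(u)\,du\,dv$; and $g_3'(v)=3v^2+2\hbar v+q=3(v-c_1)^2+(6c_1+2\hbar)(v-c_1)+(3c_1^2+2c_1\hbar+q)$ gives the $(3c_1^2+2c_1\hbar+q)$-weighted single integral, the $(2\hbar+6c_1)$-weighted double integral, and $6\int_{c_1}^{c_2}\!\int_v^{c_2}\!\int_u^{c_2}x(s)\,ds\,du\,dv$. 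Collecting these reproduces $\varOmega_1,\varOmega_2,\varOmega_3$, and the corollary follows.

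I do not expect a genuine obstacle here — the mathematical content is entirely in Lemma~\ref{L2}, and what remains is bookkeeping: keeping the Gram--Schmidt constants consistent with those already named, and correctly re-expressing each $g_k'$ in the $(v-c_1)$-basis so that the Fubini conversions land on the precise coefficients in the statement. The only mild subtlety is fixing the nesting direction of the iterated integrals (writing $\int_{c_1}^{c_2}\!\int_v^{c_2}\cdots$ rather than $\int_{c_1}^{c_2}\!\int_{c_1}^{v}\cdots$), which pins down the signs and the factorial constants; the identity above settles this. One should also read the obvious typographical slips in the displayed $\varOmega_k$ in the intended way: $k$ for $\bar k$, $r(v)$ for $x(v)$ under the single integrals, and the upper limit $c_2$ (not $c_1$) on the outermost integral of the triple-integral term in $\varOmega_3$.
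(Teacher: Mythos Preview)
Your proposal is correct and follows exactly the paper's approach: the paper obtains the corollary simply by taking $\varrho(v)=\dot{x}(v)$ in Lemma~\ref{L2} with $\hat m=3$, and your integration-by-parts bookkeeping (including the expansion of $g_k'$ in the $(v-c_1)$-basis and the Fubini identities) is the natural way to unpack that specialization. Your identification of the typographical slips in the displayed $\varOmega_k$ is also accurate.
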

		\begin{remark}\label{R6}
			In particular cases, when $\delta$ approaches zero, the coefficients of the orthogonal functon are
			\begin{align}\notag
				k=-\frac{c_2+c_1}{2}&,&m&=\frac{a^{2}+4c_1c_2+c_2^{2}}{6},&c=&-c_2-c_1,\\\notag
				\hbar=-\frac{3(c_1+c_2)}{2}&,&q&=\frac{3(c_1^{2}+3c_1c_2+c_2^{2})}{5},&r=&-\frac{(c_1+c_2)(c_1^2+8c_1c_2+c_2^{2})}{20},
			\end{align}
			and the inequalities (2.2) will become the following inequalities,
			\begin{align}\label{e10}
				\int^{c_2}_{c_1}\varrho^{T}(v) \Gamma\varrho(v)dv\geq \frac{1}{c_2-c_1}\zeta^{T}\bar{ \Gamma}\zeta
			\end{align}
			where 
			{\footnotesize\begin{align}\notag
					&\zeta=\Biggl(\int^{c_2}_{c_1}\varrho(v)dv,\int^{c_2}_{c_1}(k+v)\varrho(v)dv,\int^{c_2}_{c_1}(v^{2}+mv+c)\varrho(v)dv,\int^{c_2}_{c_1}(v^{3}+\hbar v^{2}+qv+r)\varrho(v)dv\Biggl),\\\notag
					&\bar{ \Gamma}=diag\biggl\{ \Gamma,\frac{12}{(c_2-c_1)^{2}} \Gamma,\frac{180}{(c_2-c_1)^{4}} \Gamma,\frac{2800}{(c_2-c_1)^{6}} \Gamma\biggl\}.
			\end{align}}
			When $\delta $ approaches zero, the inequality (2.8)  becomes four terms Wirtinger-based inequality in\cite{Seuret}. Based on previous work, we take a cubic polynomial then can get above inequality. Obviously, considering the higher polynomial, getting the less conservation in result. However, we need to consider the computation of  Wirtinger-based inequality, and the inequality with weighted terms is difficult to directly apply   Wirtinger-based inequality to deal with the stability problem of time-delay systems.
		\end{remark}
		\begin{lemma}\label{L7}
			\cite{Park}	For given matrices $ \Gamma \in \mathbb{S}_+^{n}$, scalar $W_i\in \mathbb{R}^n (i=1,2)$, a function $\Theta$, if there exists a matrix $S \in \mathbb{R}^{n \times n}$ such that $\left[\begin{array}{ll} \Gamma & S \\ * &  \Gamma\end{array}\right] \geq 0$ then the inequality \begin{align}
				\Theta=\left[\begin{array}{l}W_1 \\ W_2\end{array}\right]^{T}
				\left[\begin{array}{cc}\frac{1}{\bar{\sigma}}  \Gamma & 0 \\ 0 & \frac{1}{1-\bar{\sigma}}  \Gamma\end{array}\right]\left[\begin{array}{l}W_1 \\ W_2\end{array}\right]
				\geq\left[\begin{array}{l}W_1 \\ W_2\end{array}\right]^{T}\left[\begin{array}{cc} \Gamma & S \\ * &  \Gamma\end{array}\right]\left[\begin{array}{l}W_1 \\ W_2\end{array}\right]
			\end{align} holds for all $\bar{\sigma} \in(0,1)$ and $\Theta=\frac{1}{\bar{\sigma}}  W_1^T  \Gamma W_1 +\frac{1}{1-\bar{\sigma}} W_2^T  \Gamma W_2$.
		\end{lemma}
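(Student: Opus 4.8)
The plan is to reduce the claimed matrix inequality to the nonnegativity of a single quadratic form, and then to display that quadratic form as $\eta^{T}M\eta$ for the positive semidefinite matrix $M=\left[\begin{array}{cc}\Gamma & S\\ * & \Gamma\end{array}\right]$ and a suitably chosen vector $\eta$.

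First I would subtract the right-hand side from the left-hand side and expand. Since $\Gamma\in\mathbb{S}_+^n$ and $W_1,W_2\in\mathbb{R}^n$, each of $W_i^{T}\Gamma W_j$ and $W_i^{T}SW_j$ is a scalar, so the two off-diagonal contributions collapse to $2W_1^{T}SW_2$, and one gets
\begin{align}\notag
	D:=\Theta-\left[\begin{array}{c}W_1\\ W_2\end{array}\right]^{T}\left[\begin{array}{cc}\Gamma & S\\ * & \Gamma\end{array}\right]\left[\begin{array}{c}W_1\\ W_2\end{array}\right]
	=\frac{1-\bar{\sigma}}{\bar{\sigma}}\,W_1^{T}\Gamma W_1+\frac{\bar{\sigma}}{1-\bar{\sigma}}\,W_2^{T}\Gamma W_2-2W_1^{T}SW_2 .
\end{align}
It therefore suffices to prove $D\geq 0$ for every $\bar{\sigma}\in(0,1)$. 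I would also note in passing that the stated identity $\Theta=\frac{1}{\bar{\sigma}}W_1^{T}\Gamma W_1+\frac{1}{1-\bar{\sigma}}W_2^{T}\Gamma W_2$ is merely the expansion of the block-diagonal quadratic form appearing in the statement, so it needs no separate verification.

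The key step is then a sign-flipped substitution. Put $\alpha:=(1-\bar{\sigma})/\bar{\sigma}$, which is strictly positive and finite because $\bar{\sigma}$ ranges over the open interval $(0,1)$, and observe $\bar{\sigma}/(1-\bar{\sigma})=1/\alpha$. With $\eta:=\big(\sqrt{\alpha}\,W_1,\,-\tfrac{1}{\sqrt{\alpha}}\,W_2\big)^{T}$ one checks directly that
\begin{align}\notag
	\eta^{T}\left[\begin{array}{cc}\Gamma & S\\ * & \Gamma\end{array}\right]\eta
	=\alpha\,W_1^{T}\Gamma W_1+\frac{1}{\alpha}\,W_2^{T}\Gamma W_2-2W_1^{T}SW_2=D,
\end{align}
where the minus sign in the second block of $\eta$ is exactly what converts the $+2W_1^{T}SW_2$ a naive expansion would give into the required $-2W_1^{T}SW_2$. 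Since $\left[\begin{array}{cc}\Gamma & S\\ * & \Gamma\end{array}\right]\geq 0$ by hypothesis, the left-hand side is nonnegative, hence $D\geq 0$, which gives $\Theta=D+\big(\text{RHS}\big)\geq\big(\text{RHS}\big)$, as claimed.

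I do not anticipate a genuine obstacle here: the only nontrivial ingredient is recognizing the right $\eta$ (with the sign flip and the $\sqrt{\alpha}$/$1/\sqrt{\alpha}$ scaling), and everything else is bookkeeping. The one point worth stating explicitly in the write-up is that $\bar{\sigma}\in(0,1)$ is used precisely to keep $\alpha$ and $1/\alpha$ well defined and finite; the positive-definiteness of $\Gamma$ itself is not invoked beyond guaranteeing that a matrix $S$ with $\left[\begin{array}{cc}\Gamma & S\\ * & \Gamma\end{array}\right]\geq 0$ exists (e.g. $S=0$).
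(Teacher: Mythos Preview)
Your argument is correct: writing the difference $D$ as $\eta^{T}M\eta$ with $M=\left[\begin{smallmatrix}\Gamma & S\\ S^{T} & \Gamma\end{smallmatrix}\right]\geq 0$ and $\eta=\big(\sqrt{\alpha}\,W_1,\,-\tfrac{1}{\sqrt{\alpha}}W_2\big)^{T}$ is exactly the standard trick behind the reciprocally convex inequality, and the bookkeeping checks out. The paper itself does not supply a proof of this lemma; it merely cites the result from \cite{Park}, so there is no in-paper argument to compare against, but your proof is essentially the original one from that reference.
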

		Our novel weighted reciprocally convex inequality is introduced in the following lemma.
		\begin{lemma}\label{WRCI}
			(Weighted reciprocally convex inequality) For matrices $\Gamma\in \mathbb{S}_+^{n} $, any matrix $S\in\mathbb{R}^{n \times n}$, $\vartheta_1\leq\vartheta(t)\leq\vartheta_2$ and $\left[\begin{array}{ll} \Gamma & S \\ *&   \Gamma\end{array}\right]\geq 0$, the following inequality holds,
			\begin{align}  \int_{t-\vartheta_2}^{t-\vartheta_1} e^{\delta\left(\vartheta_1+s-t\right)} \dot{\varrho}^{T}(s)  \Gamma \dot{\varrho}(s) d s  \geqslant \frac{\delta}{e^{\delta\left(\vartheta_2-\vartheta_1\right)}-1}\left[\begin{array}{l}\varUpsilon_1 \\ \varUpsilon_2\end{array}\right]^{T}\left[\begin{array}{ll} \Gamma & S \\ *&   \Gamma\end{array}\right]\left[\begin{array}{l}\varUpsilon_1 \\ \varUpsilon_2\end{array}\right]\end{align}
			where\\
			$\varUpsilon_1=\varrho(t-\vartheta(t))-\varrho(t-\vartheta_2),\quad\varUpsilon_2=\varrho(t-\vartheta_1)-\varrho(t-\vartheta(t)) $
		\end{lemma}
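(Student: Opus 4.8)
The plan is to reduce the weighted integral on the left-hand side to a sum of two single integrals over the sub-intervals $[t-\vartheta_2,\,t-\vartheta(t)]$ and $[t-\vartheta(t),\,t-\vartheta_1]$, apply the weighted Jensen-type (Wirtinger zeroth-order) inequality to each piece, and then glue the two resulting quadratic terms together via Lemma \ref{L7}. First I would split the integral at $s=t-\vartheta(t)$ and on each piece invoke the weighted integral inequality of Lemma \ref{L1} (or its $\hat m=0$ specialization): for a sub-interval $[c_1,c_2]$, $\int_{c_1}^{c_2} e^{\delta(v-c_2)}\dot\varrho^T(v)\Gamma\dot\varrho(v)\,dv \ge \frac{1}{\langle g_0,g_0\rangle_w}\big(\int_{c_1}^{c_2}\dot\varrho(v)\,dv\big)^T\Gamma\big(\int_{c_1}^{c_2}\dot\varrho(v)\,dv\big)$, where $\langle g_0,g_0\rangle_w=\int_{c_1}^{c_2} e^{-\delta(v-c_2)}\,dv = \frac{1}{\delta}\big(e^{\delta(c_2-c_1)}-1\big)$ and $\int_{c_1}^{c_2}\dot\varrho(v)\,dv=\varrho(c_2)-\varrho(c_1)$. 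The one subtlety is bookkeeping the exponential weight: the left side of the lemma carries the weight $e^{\delta(\vartheta_1+s-t)}$, which on the piece $[t-\vartheta(t),t-\vartheta_1]$ is exactly $e^{\delta(s-(t-\vartheta_1))}$, matching the form needed for Lemma \ref{L1} directly; on the piece $[t-\vartheta_2,t-\vartheta(t)]$ one writes $e^{\delta(\vartheta_1+s-t)} = e^{-\delta(\vartheta(t)-\vartheta_1)}\,e^{\delta(s-(t-\vartheta(t)))}$, so after applying Lemma \ref{L1} the first piece contributes $\frac{\delta}{e^{\delta(\vartheta(t)-\vartheta_1)}-1}\cdot e^{-\delta(\vartheta(t)-\vartheta_1)}\,\varUpsilon_1^T\Gamma\varUpsilon_1 = \frac{\delta}{e^{\delta(\vartheta(t)-\vartheta_1)}-e^{-\delta(\vartheta(t)-\vartheta_1)}\cdot 1}\cdots$ — here I would simplify to obtain a clean factor $\frac{\delta}{1-e^{-\delta(\vartheta(t)-\vartheta_1)}}$ on the $\varUpsilon_1$ term and $\frac{\delta}{e^{\delta(\vartheta_2-\vartheta(t))}-1}$ on the $\varUpsilon_2$ term, or, after scaling, express both bounds with a common structure.

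Having obtained a lower bound of the form $\frac{1}{\alpha}\varUpsilon_1^T\Gamma\varUpsilon_1 + \frac{1}{\beta}\varUpsilon_2^T\Gamma\varUpsilon_2$ with $\alpha,\beta>0$, the next step is to show $\alpha+\beta \le \tfrac{1}{\delta}\big(e^{\delta(\vartheta_2-\vartheta_1)}-1\big)$ (in fact with the natural choice of weights one expects equality, up to the exponential prefactors, by the identity $e^{\delta(\vartheta_2-\vartheta_1)}-1 = (e^{\delta(\vartheta_2-\vartheta(t))}-1)e^{\delta(\vartheta(t)-\vartheta_1)} + (e^{\delta(\vartheta(t)-\vartheta_1)}-1)$). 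Writing $\bar\sigma = \alpha/(\alpha+\beta)\in(0,1)$, the bound becomes $\tfrac{1}{\alpha+\beta}\big(\tfrac1{\bar\sigma}\varUpsilon_1^T\Gamma\varUpsilon_1 + \tfrac1{1-\bar\sigma}\varUpsilon_2^T\Gamma\varUpsilon_2\big)$, at which point Lemma \ref{L7} applies verbatim with $W_1=\varUpsilon_1$, $W_2=\varUpsilon_2$ and the given slack matrix $S$ satisfying $\left[\begin{smallmatrix}\Gamma & S\\ * & \Gamma\end{smallmatrix}\right]\ge 0$, yielding the claimed right-hand side $\frac{\delta}{e^{\delta(\vartheta_2-\vartheta_1)}-1}\big[\varUpsilon_1^T\ \varUpsilon_2^T\big]\left[\begin{smallmatrix}\Gamma & S\\ * & \Gamma\end{smallmatrix}\right]\big[\varUpsilon_1^T\ \varUpsilon_2^T\big]^T$.

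The main obstacle I anticipate is purely the exponential bookkeeping in the first step: making sure the weight $e^{\delta(\vartheta_1+s-t)}$ is correctly re-centered on each sub-interval so that the prefactors combine to give exactly $\frac{\delta}{e^{\delta(\vartheta_2-\vartheta_1)}-1}$ rather than something merely proportional to it, and verifying that the denominators $\alpha,\beta$ produced by Lemma \ref{L1} indeed sum (after the appropriate common scaling) to $\tfrac1\delta(e^{\delta(\vartheta_2-\vartheta_1)}-1)$ so that no slack is lost at the gluing stage. The degenerate endpoint cases $\vartheta(t)=\vartheta_1$ and $\vartheta(t)=\vartheta_2$ (where one sub-interval collapses and one $\varUpsilon_i$ vanishes) should be checked separately but are immediate. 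Everything else — positivity of $\Gamma$, differentiability of $\varrho$, and the limit $\delta\to 0$ recovering the classical reciprocally convex inequality of Lemma \ref{L7} — is routine.
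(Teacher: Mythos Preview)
Your proposal follows essentially the same path as the paper's proof: split the integral at $s=t-\vartheta(t)$, apply the zeroth-order weighted Jensen bound (Lemma~\ref{L1} with $\hat m=0$) on each piece, and then invoke Lemma~\ref{L7} after checking that the two resulting denominators sum exactly to $\tfrac1\delta\big(e^{\delta(\vartheta_2-\vartheta_1)}-1\big)$. Your intermediate prefactors are swapped --- the correct ones are $\dfrac{\delta}{e^{\delta(\vartheta_2-\vartheta_1)}-e^{\delta(\vartheta(t)-\vartheta_1)}}$ on $\varUpsilon_1$ and $\dfrac{\delta}{e^{\delta(\vartheta(t)-\vartheta_1)}-1}$ on $\varUpsilon_2$ --- but since you already flag this bookkeeping as the obstacle and state the key additive identity correctly, the plan goes through unchanged.
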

		\begin{proof}
			By splitting $\vartheta_1$ and $\vartheta_2$, then can get
			\begin{align}\notag
				&\int_{t-\vartheta_2}^{t-\vartheta_1} e^{\delta\left(s+\vartheta_1-t\right)} \dot{\varrho}^T(s) R \dot{\varrho}(s) d s\\\notag
				&=e^{\delta\left(\vartheta_1-\vartheta(t)\right)} \int_{t-\vartheta_2}^{t-\vartheta(t)} e^{\delta(s+\vartheta(t)-t)} \dot{\varrho}^T(s) R \dot{\varrho}(s) d s+\int_{t-\vartheta(t)}^{t-\vartheta_1} e^{\delta\left(s-\vartheta_1-t\right)} \dot{\varrho}^T(s) R \dot{\varrho}(s) d s \\\notag
				& \geq \frac{\delta}{e^{\delta\left(\vartheta_2-\vartheta_1\right)}-e^{\delta\left(\vartheta(t)-\vartheta_1\right)}}\varUpsilon_1^T R\varUpsilon_1 +\frac{\delta}{e^{\delta\left(\vartheta(t)-\vartheta_1\right)}-1}\varUpsilon_2^T R\varUpsilon_2 \\\notag
				& =\frac{\delta}{e^{\delta\left(\vartheta_2-\vartheta_1\right)}-1}\left\{\left[1+\frac{e^{\delta(\vartheta(t)-\vartheta_1)}-1}{e^{\delta\left(\vartheta_2-\vartheta_1\right)}-e^{\delta\left(\vartheta_1(t)-\vartheta_1\right)}}\right] \varUpsilon_1 R \varUpsilon_1\right. \\\notag
				& \left.+\left[1+\frac{e^{\delta\left(\vartheta_2-\vartheta_1\right)}-e^{\delta\left(\vartheta(t)-\vartheta_1\right)}}{e^{\delta\left(\vartheta(t)-\vartheta_1\right)}-1}\right] \varUpsilon_2 R \varUpsilon_2\right\}\\\notag
				& \geq\frac{\delta}{e^{\delta\left(\vartheta_2-\vartheta_1\right)}-1}\left[\begin{array}{l}
					\varUpsilon_1 \\
					\varUpsilon_2
				\end{array}\right]^T\left[\begin{array}{ll}
					R & S \\
					* & R
				\end{array}\right]\left[\begin{array}{l}
					\varUpsilon_1 \\
					\varUpsilon_2 
				\end{array}\right].\notag
		\end{align}\end{proof}
		\begin{remark}\label{R9}
		Reciprocally convex inequalities can be employed to analyze the stability of time-varying delay systems. By establishing appropriate LKFs and utilizing these inequalities, we can derive improved criteria for the stability of time-varying delay systems. As mentioned in the previous lemma, we introduce a new weighted reciprocally convex inequality that allows for the direct application of these inequalities to exponential stability. This serves as our main lemma and plays a crucial role in deriving the improved stability criteria in the following section.
		\end{remark}	
			
			\section{ Exponential stability analysis of NNs system}
			
		In this section, we demonstrate our primary result regarding the exponential stability of system \eqref{a6}.
			
			\begin{theorem} \label{Theorem1}
			For given scalars $ h > 0$ and $ \mu > 0$, system (\ref{a6}) achieves global exponential stability with a convergence rate \( k \) satisfying \( 0 < k < \min_{1 \leq i \leq n} c_i \). The stability is guaranteed by matrices   \(\mathscr{P}  \in \mathbb{S}_+^{3n }\), \( \mathscr{Q} \in \mathbb{S}_+^{2n } \), \( \mathscr{U}_i, \mathcal{Z}_i \in \mathbb{S}_+^{n } \) (\( i = 1, 2, 3, 4 \)), \( \mathscr{N}_j, \mathscr{M}_j \in \mathbb{S}_+^{n} \) (\( j = 1, 2 \)),  matrices \( \mathscr{D}_i = \text{diag}\{d_{i1}, \ldots, d_{in}\} \in \mathbb{D}_+^{n} \) and \( \mathscr{R}_i \in \mathbb{D}_+^{n } \) (\( i = 1, 2 \)), with any matrices \( \mathcal{S}_1 \in \mathbb{R}^{4n \times 4n} \) and \( \mathcal{S}_2 \in \mathbb{R}^{n \times n} \) that satisfy the following LMIs:
			$$\varSigma+\varTheta_1<0,\quad \varSigma+\varTheta_2<0, \quad \varGamma>0\quad \varOmega_1>0$$
			where\\
			\begin{align}\notag
					\varSigma&=\varDelta_1+\varDelta_2+\varDelta_3+\varDelta_4+\varDelta_5+\varPsi_1+\varPsi_2+\varPi, \quad
				\varTheta_1=\varSigma_1+\varSigma_2,\quad \varTheta_2=\psi_1+\psi_2, \\\nonumber \upsilon_i&=\biggl[\underbrace{0,0,\ldots,\overbrace{I}^i,\ldots,0}_{15}\biggl]^{T}_{15n\times n}, i= 1,2,\ldots,15,
				{	\upsilon_v=[-C,0_{n\times 2n},A,B,0_{n\times 10n}]^T,}\\\nonumber
					\mathscr{P}&=\left[\begin{array}{ccc}\mathscr{P}_{11} & \mathscr{P}_{12} & \mathscr{P}_{13} \\
					\ast & \mathscr{P}_{22} & \mathscr{P}_{23} \\
					\ast & \ast & \mathscr{P}_{33}\end{array} \right],
				\varGamma=\left[\begin{array}{ccc}\mathcal{Z}_{11} & \mathcal{S}_1 \\
					\mathcal{S}_1^T & \mathcal{Z}_{12}\end{array}\right],
				\varOmega=\left[\begin{array}{ccc}\mathcal{Z}_{13} & \mathcal{S}_1\\
					\mathcal{S}^T & \mathcal{Z}_{13}\end{array}\right], \varOmega_1=\left[\begin{array}{ccc}\mathcal{Z}_{4} & \mathcal{S}_2\\
					\mathcal{S}_2^T & \mathcal{Z}_{4}\end{array}\right],\\\nonumber
				\mathcal{Z}_{11}&=diag\{\mathcal{Z}_1+\mathscr{N}_1, 3(\mathcal{Z}_1+\mathscr{N}_1), 5(\mathcal{Z}_1+\mathscr{N}_1),7(\mathcal{Z}_1+\mathscr{N}_1)\},\\\nonumber
				\mathcal{Z}_{12}&=diag\{\mathcal{Z}_1+\mathscr{N}_2, 3(\mathcal{Z}_1+\mathscr{N}_2), 5(\mathcal{Z}_1+\mathscr{N}_2),7(\mathcal{Z}_1+\mathscr{N}_2)\},\,\mathcal{Z}_{13}=diag\{\mathcal{Z}_1,3\mathcal{Z}_1,5\mathcal{Z}_1,7\mathcal{Z}_1\}\\\nonumber	
			\mathscr{N}_{14}&=diag\{\mathscr{N}_1,3\mathscr{N}_1,5\mathscr{N}_1,7\mathscr{N}_1\},\quad\mathscr{N}_{15}=diag\{\mathscr{N}_2,3\mathscr{N}_2,5\mathscr{N}_2,7\mathscr{N}_2\},\\\nonumber
				\varGamma(i)&=[(\upsilon_i-\upsilon_{i+1}),\,(\upsilon_i+\upsilon_{i+1}-2\upsilon_{i+6}),\,(\upsilon_i-\upsilon_{i+1}+6\upsilon_{i+6}-6\upsilon_{i+9}),\\\notag
				&~~~~~(\upsilon_i+\upsilon_{i+1}-12\upsilon_{i+6}+30\upsilon_{i+9}-20\upsilon_{i+12})],\quad r(i)=[\upsilon_i-\upsilon_{i+1}],\quad i=1,2\\\nonumber	\varGamma(3)&=[(\upsilon_1-\upsilon_3),\,((h+c_1)\upsilon_1-c_1\upsilon_3-h\upsilon_6),\,((h^2+c_2h+c_3)\upsilon_1-c_3\upsilon_3-c_2h\upsilon_6-h^2\upsilon_{9})],\\\nonumber	\gamma_1&=[\varGamma(1),\,\varGamma(2)],\quad \gamma_2=[r(1),\,r(2)],\quad \hat{\zeta}=[\upsilon_1,\,h\upsilon_6,\,h\upsilon_{9}],\\\nonumber
				\varDelta_1&=sym\{k\hat{\zeta}\mathscr{P}\hat{\zeta}^T\,+\,2k[\upsilon_4\mathscr{D}_1\upsilon_1^T\,+\,(\upsilon_1\mathscr{L}\,-\,\upsilon_4)\mathscr{D}_2\upsilon_1^T]\,+\,\upsilon_4\mathscr{D}_1\upsilon_v^T+\,(\upsilon_1\mathscr{L}-\upsilon_4)\mathscr{D}_2\upsilon_v^T\},\\\nonumber
				\varDelta_2&=e^{2kh}\{[\upsilon_1,\,\upsilon_4]\mathscr{Q}[\upsilon_1,\,\upsilon_4]^T\,+\,\upsilon_1\mathscr{U}_1\upsilon_1^T\,+\,\upsilon_1\mathscr{U}_2\upsilon_1^T\}\,-\,(1\,-\,\mu) [\upsilon_2,\,\upsilon_5]\mathscr{Q}[\upsilon_2,\,\upsilon_5]^T\\\notag
				&~~~-e^{2k(h-\xi)}[\upsilon_{15}\mathscr{U}_2\upsilon_{15}^T-\upsilon_{15}\mathscr{U}_3\upsilon_{15}^T]\,-\,[\upsilon_3\mathscr{U}_1\upsilon_3^T\,+\,\upsilon_3\mathscr{U}_3\upsilon_3^T],\\\nonumber
				\chi_0&=\upsilon_1-\upsilon_3,\quad \chi_1=\bar{k}\upsilon_1+(h-\bar{k})\upsilon_3-h\upsilon_6,\\\notag
				\chi_2&=m\upsilon_1+(ch-m-h^2)\upsilon_3+(2h-c)h\upsilon_6-h^2\upsilon_9,\\\notag
				\chi_3&=r\upsilon_1+(h^3-\hbar h^2+qh-r)\upsilon_3-(3h^2-2\hbar h+q)h\upsilon_6-(2\hbar-6h)h^2\upsilon_9-h^3\upsilon_{12},\\\notag
				\varDelta_3&=h^2(\upsilon_v\mathcal{Z}_1\upsilon_v^T\,+\,\upsilon_1\mathcal{Z}_2\upsilon_1^T\,+\,\upsilon_v\mathcal{Z}_3\upsilon_v^T+\upsilon_v\mathcal{Z}_4\upsilon_v^T)\,-\,\biggr[\frac{h}{\langle g_0,g_0\rangle_w}h\upsilon_6\mathcal{Z}_2h\upsilon_6^T\\\notag
				&+\,\frac{h}{\langle g_1,g_1\rangle_w}((-h+\bar{k}){h}\upsilon_6\,+\frac{h^2}{2}\,\upsilon_{9})\mathcal{Z}_2((-h+\bar{k}){h}\upsilon_6\,+\frac{h^2}{2}\,\upsilon_{9})^T\,\\\nonumber
					&+\,\frac{h}{\langle g_2,g_2\rangle_w}((h^2-ch+m){h}\upsilon_6\,+(-2h+c)\frac{h^2}{2}\,\upsilon_{9}+\frac{h^3}{3}\upsilon_{12})\\\notag
					&\times \mathcal{Z}_2((h^2-ch+m){h}\upsilon_6\,+(-2h+c)\frac{h^2}{2}\,\upsilon_{9}+\frac{h^3}{3}\upsilon_{12})^T+\frac{h}{\left\langle g_0, g_0\right\rangle_w} \chi_0{ }^T \mathcal{Z}_3 \chi_0\,\\\nonumber
				&+\frac{h}{\left\langle g_1, g_1\right\rangle _w} \chi_1^{{T}} \mathcal{Z}_3 \chi_1+\frac{h}{\left\langle g_2, g_2\right\rangle_w} \chi_2^{{T}} \mathcal{Z}_3 \chi_2+\frac{h}{\left\langle g_3, g_3\right\rangle_w} \chi_3^{{T}} \mathcal{Z}_3 \chi_3\biggr],\\\nonumber	\varDelta_4&=\frac{h^2}{2}\upsilon_v\mathscr{N}_1\upsilon_v^T\,+\,\frac{h^2}{2}\upsilon_v\mathscr{N}_2\upsilon_v^T\,-\,e^{-2kh}\biggr[2(\upsilon_1-\upsilon_7)\mathscr{N}_1(\upsilon_1-\upsilon_7)^T\\\notag
				&+\,4(\upsilon_1\,+\,2\upsilon_7\,-\,3\upsilon_{10})\mathscr{N}_1(\upsilon_1\,+\,2\upsilon_7\,-\,3\upsilon_{10})^T\,+\,2(\upsilon_2-\upsilon_8)\mathscr{N}_1(\upsilon_2-\upsilon_8)^T\\\notag
				&+\,4(\upsilon_2\,+\,2\upsilon_8\,-\,3\upsilon_{11})\mathscr{N}_1(\upsilon_2\,+\,2\upsilon_8\,-\,3\upsilon_{11})^T\,+\,2(\upsilon_2-\upsilon_7)\mathscr{N}_2(\upsilon_2-\upsilon_7)^T\\\notag
				&+\,4(\upsilon_2\,-\,4\upsilon_7\,+\,3\upsilon_{10})\mathscr{N}_2(\upsilon_2\,-\,4\upsilon_7\,+\,3\upsilon_{10})^T\,+\,2(\upsilon_3-\upsilon_8)\mathscr{N}_2(\upsilon_3-\upsilon_8)^T\\\nonumber	+&\,4(\upsilon_3\,-\,4\upsilon_8\,+\,3\upsilon_{11})\mathscr{N}_2(\upsilon_3\,-\,4\upsilon_8\,+\,3\upsilon_{11})^T\biggr],\\\nonumber	\varDelta_5&=\frac{\mu}{h}\upsilon_1(\mathscr{M}_1{-\mathscr{M}_2})\upsilon_1^T,\quad	\varPsi_1=-e^{-2kh}\gamma_1\varOmega\gamma_1^T,\quad \varPsi_2= -\frac{2kh}{e^{2 k h}-1}\gamma_2\varOmega_1\gamma_2^T,\\\nonumber
				\Pi&=sym(\upsilon_1\mathscr{L}\mathscr{R}_1\upsilon_4^T\,-\,\upsilon_4\mathscr{R}_1\upsilon_4^T\,+\,\upsilon_2\mathscr{L}\mathscr{R}_2\upsilon_5^T\,-\,\upsilon_5\mathscr{R}_2\upsilon_5^T),\\\nonumber
				\varSigma_1&=sym([\upsilon_1,\,h\upsilon_7,\,h\upsilon_{9}]\mathscr{P}[\upsilon_v,\,\upsilon_1-\upsilon_3,\,2(\upsilon_1-\upsilon_6)]^T),\quad\varSigma_2=sym(k\upsilon_1\mathscr{M}_1\upsilon_1^T\,+\,\upsilon_1\mathscr{M}_1\upsilon_v^T),\\\nonumber
				\psi_1&=sym([\upsilon_1,\,h\upsilon_8,\,h\upsilon_{9}]\mathscr{P}[\upsilon_v,\,\upsilon_1-\upsilon_3,\,2(\upsilon_1-\upsilon_6)]^T),\quad\psi_2=sym(k\upsilon_1\mathscr{M}_2\upsilon_1^T\,+\,\upsilon_1\mathscr{M}_2\upsilon_v^T),\\\nonumber
				\delta&=\frac{h(t)}{h},\quad\tau=\frac{h-h(t)}{h},\quad \mathscr{L}=diag\{\mathscr{L}_1,\ldots,\mathscr{L}_n\}.
			\end{align}
			\end{theorem}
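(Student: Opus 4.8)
The plan is to run a Lyapunov--Krasovskii argument in which the weighted estimates of Corollary~\ref{C4} and Lemma~\ref{WRCI} take the place of the usual Jensen and reciprocally convex bounds. First I would fix the augmented vector $\xi(t)\in\IR^{15n}$ whose blocks are $r(t)$, $r(t-h(t))$, $r(t-h)$, $g(r(t))$, $g(r(t-h(t)))$ and the averaged single/double/triple integrals of $r$ over $[t-h,t]$, $[t-h(t),t]$ and $[t-h,t-h(t)]$, arranged so that $\upsilon_i^T\xi(t)$ extracts the $i$-th block and, by \eqref{a6}, $\upsilon_v^T\xi(t)=\dot r(t)$ with $C=K_0$, $A=K_1$, $B=K_2$. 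Then I would take the LKF $V(t)=V_1(t)+\cdots+V_5(t)$ whose pieces are exactly those whose derivatives produce $\varDelta_1,\dots,\varDelta_5$: the quadratic form $\hat\zeta^T(t)\mathscr P\,\hat\zeta(t)$ with the monotonicity-type $\mathscr D_i$ terms (for $\varDelta_1$); the $e^{2kh}$-weighted $\mathscr Q,\mathscr U_i$ terms (for $\varDelta_2$); the weighted double integrals $h\!\int_{-h}^{0}\!\int_{t+\theta}^{t}e^{2k(s-t)}\dot r^T(s)(\mathcal Z_1+\mathcal Z_2+\mathcal Z_3+\mathcal Z_4)\dot r(s)\,ds\,d\theta$ plus $h^2$-weighted state integrals (for $\varDelta_3$); the $e^{-2kh}$-weighted $\mathscr N_j$ double integrals over $[t-h,t]$ (for $\varDelta_4$); and the $h(t)$-split $\mathscr M_j$ term (for $\varDelta_5$). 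Positivity $V(t)\ge\alpha\|r(t)\|^2$ with $\alpha>0$ follows from $\mathscr P\in\mathbb S_+^{3n}$, positive-definiteness of the remaining matrices, the sector relation \eqref{a7}, and the Schur conditions $\varGamma>0$, $\varOmega_1>0$ which make the reciprocally-convex remainders harmless; a crude bound $V(0)\le\beta\phi^2$ follows from the same data and \eqref{a2}.

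The core step is to differentiate $V$ along \eqref{a6} and form $\dot V(t)+2kV(t)$. Each exponentially weighted integral contributes a boundary term (supplying the $e^{2kh}$, $e^{2k(h-h(t))}$, $e^{-2kh}$ factors), a $-2k$-multiple of itself that cancels against the corresponding part of $2kV(t)$, and a leftover single integral $-h\!\int_{t-h}^{t}e^{2k(s-t)}\dot r^T\mathcal Z_j\dot r\,ds$ (with a $\mathcal Z_3$-analogue). To each leftover I apply Corollary~\ref{C4} with $(c_1,c_2,\delta)=(t-h,t,2k)$ — this is where the cubic orthogonal polynomials and the scalars $\bar k,m,c,\hbar,q,r$ enter — converting the integral into $\sum_{k=0}^{3}\tfrac{1}{\langle g_k,g_k\rangle_w}\varOmega_k^T\mathcal Z_j\varOmega_k$, which in $\xi$-coordinates is the bracket of $\varDelta_3$ expressed through $\chi_0,\dots,\chi_3$ once the iterated integrals are rewritten via $\upsilon_6,\upsilon_9,\upsilon_{12}$. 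For the delay-dependent term I use $\int_{t-h}^{t}=\int_{t-h(t)}^{t}+\int_{t-h}^{t-h(t)}$ and Lemma~\ref{WRCI} (and Lemma~\ref{L7} for the first-order $\mathcal Z_1$ part) with $(\vartheta_1,\vartheta_2)=(0,h)$, $\delta=2k$, which yields $\varPsi_1$ (carrying $e^{-2kh}$ and the fourth-order slack $\varOmega$) and $\varPsi_2$ (carrying $\tfrac{2kh}{e^{2kh}-1}$ and $\varOmega_1$); the $\mathscr N_j$ integrals are handled by the Bessel--Legendre/Wirtinger bound encoded in $\mathcal Z_{13},\mathscr N_{14},\mathscr N_{15}$, giving $\varDelta_4$. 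The sector relation \eqref{a7} is injected by the S-procedure with diagonal multipliers $\mathscr D_1,\mathscr D_2$ (into $\varDelta_1$) and $\mathscr R_1,\mathscr R_2$ (the term $\varPi$), and the dependence on $\dot h(t)$ is absorbed via $|\dot h(t)|\le\mu$, which is the role of the $(1-\mu)$ factor in $\varDelta_2$ and the $\mu/h$ coefficient in $\varDelta_5$. Collecting everything one obtains
\[
\dot V(t)+2kV(t)\ \le\ \xi^T(t)\Big[\tfrac{h(t)}{h}\big(\varSigma+\varTheta_1\big)+\tfrac{h-h(t)}{h}\big(\varSigma+\varTheta_2\big)\Big]\xi(t).
\]

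Finally, since the bracket is a convex combination of $\varSigma+\varTheta_1$ and $\varSigma+\varTheta_2$ with nonnegative weights summing to one, the two LMIs force $\dot V(t)+2kV(t)\le-\varepsilon\|r(t)\|^2\le 0$ for some $\varepsilon>0$; integrating gives $V(t)\le e^{-2kt}V(0)$, whence $\alpha\|r(t)\|^2\le V(t)\le\beta\phi^2 e^{-2kt}$ and $\|r(t)\|\le\sqrt{\beta/\alpha}\,\phi\,e^{-kt}$, i.e.\ exponential stability in the sense of Definition~\ref{Definition1} with $E=\sqrt{\beta/\alpha}$. The restriction $0<k<\min_i c_i$ is precisely what keeps the $-2k[\upsilon_4\mathscr D_1\upsilon_1^T+(\upsilon_1\mathscr L-\upsilon_4)\mathscr D_2\upsilon_1^T]$ contribution of $\varDelta_1$ from destroying sign definiteness, since there the combination $-C+kI$ must stay negative.

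I expect the real obstacle to be bookkeeping rather than conceptual: differentiating the many exponentially weighted single-, double- and triple-integral terms and matching the resulting coefficients with those in $\varDelta_2,\varDelta_3,\varDelta_4$, and verifying that Corollary~\ref{C4} and Lemma~\ref{WRCI} reproduce \emph{exactly} the vectors $\chi_i$, $\gamma_1$, $\gamma_2$, $\varUpsilon_1$, $\varUpsilon_2$ and the denominators $\langle g_k,g_k\rangle_w$ after rewriting the iterated integrals in $\xi$-coordinates. A secondary subtlety is arguing that the continuum of inequalities parametrised by $(h(t),\dot h(t))$ collapses to the two stated LMIs; this relies on the bound above being genuinely affine in $h(t)$, which is exactly why the reciprocally-convex step is set up with the $h(t)$-independent constants $e^{-2kh}$ and $\tfrac{2kh}{e^{2kh}-1}$ in $\varPsi_1,\varPsi_2$ rather than with $h(t)$-dependent ones.
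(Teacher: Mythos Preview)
Your proposal is correct and follows essentially the same route as the paper: the same augmented $15n$-vector, the same five-piece LKF (with $\mathcal Z_2$ acting on $r$ rather than $\dot r$, and $\mathscr N_j$ entering via \emph{triple} integrals---minor slips in your description), Corollary~\ref{C4} on the $\mathcal Z_2,\mathcal Z_3$ integrals, the $e^{-2kh}$ extraction plus unweighted four-term Wirtinger and Lemma~\ref{L7} on the $\mathcal Z_1/\mathscr N_j$ terms to form $\varPsi_1$, Lemma~\ref{WRCI} on the $\mathcal Z_4$ term to form $\varPsi_2$, the S-procedure for $\varPi$, and the convex-combination reduction to two LMIs. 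The only cosmetic difference is that the paper builds the $e^{2kt}$ factor into the LKF and proves $\dot V\le 0$, whereas you keep it outside and prove $\dot V+2kV\le 0$; these are equivalent, and the conditions $\varGamma>0$, $\varOmega_1>0$ are needed as hypotheses of Lemmas~\ref{L7} and~\ref{WRCI}, not for positivity of $V$.
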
				
							
			\begin{proof}
				Consider an augmented LKF as follow:
				$$\mathscr{V}(r(t))=\sum_{i=1}^5\mathscr{V}_i(r(t))$$
				where
				\begin{equation}
	\begin{array}{l}
	\mathscr{V}_1(r(t))=e^{2kt}\delta^T(t)\mathscr{P}\delta(t)+2\sum^n_{i=1}e^{2kt}d_{1i}\int_{0}^{z_i}f_i(s)ds
	+2\sum^n_{i=1}e^{2kt}d_{2i}\int_{0}^{z_i}(L_is-f_i(s))ds,\nonumber\\
	\mathscr{V}_2(r(t))=e^{2kh}\biggr\{\int_{t-h(t)}^{t}e^{2ks}\varepsilon^T(s)\mathscr{Q}\varepsilon(s)ds+\int_{t-h}^{t}e^{2ks}r^T(s)\mathscr{U}_1r(s)ds+
	\int_{t-\xi}^{t}e^{2ks}r^T(s)\mathscr{U}_2r(s)ds\nonumber\\
	~~~~~~~~~~~~~~+\int_{t-h}^{t-\xi}e^{2ks}r^T(s)\mathscr{U}_3r(s)ds\biggr\},\nonumber\\
	\mathscr{V}_3(r(t))=h\biggr\{\int_{-h}^{0}\int_{t+u}^{t}e^{2ks}\dot{r}(s)^T\mathcal{Z}_1\dot{r}(s)dsdu+\int_{-h}^{0}\int_{t+u}^{t}e^{2ks}r(s)^T\mathcal{Z}_2r(s)dsdu\nonumber\\
	~~~~~~~~~~~~~~+\int_{-h}^{0}\int_{t+u}^{t}e^{2ks}\dot{r}(s)^T\mathcal{Z}_3\dot{r}(s)dsdu+\int_{-h}^{0}\int_{t+u}^{t}e^{2ks}\dot{r}(s)^T\mathcal{Z}_4\dot{r}(s)dsdu\biggr\},\nonumber\\
	\mathscr{V}_4(r(t))= \int_{-h}^{0}\int_{v}^{0}\int_{t+u}^{t}e^{2ks}\dot{r}^T(s)\mathscr{N}_1\dot{r}(s)dsdudv+ \int_{-h}^{0}\int_{-h}^{v}\int_{t+u}^{t}e^{2ks}\dot{r}^T(s)\mathscr{N}_2\dot{r}(s)dsdudv,\nonumber\\
	\mathscr{V}_5(r(t))= \frac{h(t)}{h}e^{2kt}r^T(t)\mathscr{M}_1r(t)+\frac{h-h(t)}{h}e^{2kt}r^T(t)\mathscr{M}_2r(t).\nonumber
\end{array}
\end{equation}				
				Let
				$\chi^T(t) = [r^T(t),\,r^T(t-h(t)),\,r^T(t-h),\,g^T(r(t)),\,g^T(r(t-h(t)),
				\ \frac{1}{h}\int_{t-h}^{t}r^T(s)ds,\\\,\frac{1}{h(t)}\int_{t-h(t)}^{t}r^T(s)ds,\,\frac{1}{h-h(t)}\int_{t-h}^{t-h(t)}r^T(s)ds,\,
				\frac{2}{h^2}\int_{-h}^{0}\int_{t+u}^{t}r^T(s)dsdu,\,\frac{2}{h^2(t)}\int_{-h(t)}^{0}\int_{t+u}^{t}r^T(s)dsdu,\,\nonumber\\
				\frac{2}{(h-h(t))^2}\int_{-h}^{-h(t)}\int_{t+u}^{t-h(t)}r^T(s)dsdu,\frac{6}{h^3}\int_{-h}^{0}\int_{u}^{0}\int_{t+s}^{t}r^T(v)dvdsdu,\frac{6}{h(t)^3}\int_{-h(t)}^{0}\int_{u}^{0}\int_{t+s}^{t}r^T(v)dvdsdu,\nonumber\\
				\frac{6}{(h-h(t))^3}\int_{-h}^{-h(t)}\int_{u}^{-h(t)}\int_{t+s}^{t}r^T(v)dvdsdu,r^T(t-\xi) ],$ \\$
				\delta^T(t)  =  [r^T(t),\,\int_{t-h}^{t}r^T(s)ds,\,\frac{2}{h}\int_{-h}^{0}\int_{t+u}^{t}r^T(s)dsdu],$ $
				\varepsilon^T(t)  = [r^T(t),\,g^T(r(t))].
				$\\
					First, we will estimate the time derivative of \( V_i(r(t)) \) along the trajectories of (\ref{a6}). Particular noted when \( i = 1, 2, 5 \), these three estimates provided are similar to those in \cite{SCY}, but we will present some key steps to ensure the completeness of our presentation.
			\begin{align}
			\dot{V}_1(r(t))\leq&
			e^{2kt}\chi^T(t)[\varDelta_1+\delta\varSigma_1+\tau\psi_1]\chi(t) \nonumber\\
			\dot{V}_2(r(t))  =&  e^{2kh}\bigg[e^{2kt}\epsilon^T(t)\mathscr{Q}\epsilon(t)-e^{2k(t-h(t))}(1-{\dot{h}(t)})\epsilon^T(t-h(t)\mathscr{Q}\epsilon(t-h(t))
			+e^{2kt}r^T(t)\mathscr{U}_1r(t)\nonumber\\ &-e^{2k(t-h)}r^T(t-h)\mathscr{U}_1r(t-h)+e^{2kt}r^T(t)\mathscr{U}_2r(t)
			-e^{2k(t-\xi)}r^T(t-\xi)\mathscr{U}_2r(t-\xi)\nonumber\\ &+e^{2k(t-\xi)}r^T(t-\xi)\mathscr{U}_3r(t-\xi)
			-e^{2k(t-h)}r^T(t-h)\mathscr{U}_3r(t-h)\bigg]
			\nonumber\\  \leq  &e^{2kt}\chi^T(t)\bigg\{e^{2kh}[\upsilon_1,\upsilon_4]\mathscr{Q}[\upsilon_1,\upsilon_4]^T-(1-\mu)[\upsilon_2,\upsilon_5]\mathscr{Q}[\upsilon_2,\upsilon_5]^T
			+e^{2kh}\upsilon_1\mathscr{U}_1\upsilon_1^T-\upsilon_3U_1\upsilon_3^T\nonumber\\ &+e^{2kh}\upsilon_1\mathscr{U}_2\upsilon_1^T-e^{2k(h-\xi)}\upsilon_{15}\mathscr{U}_2\upsilon_{15}^T
			+e^{2k(h-\xi)}\upsilon_{15}\mathscr{U}_3\upsilon_{15}^T-\upsilon_3\mathscr{U}_3\upsilon_3^T\bigg\}\chi(t)
			\nonumber\\
			=&  e^{2kt}\chi^T(t)\varDelta_2\chi(t) \nonumber\\
				\dot{V}_5(r(t)) = & e^{2kt}\delta\big[2kr^T(t)\mathscr{M}_1r(t)+2r^T(t)\mathscr{M}_1\dot{r}(t)\big]+\frac{\dot{h}(t)}{h}e^{2kt}r^T(t)\mathscr{M}_1r(t)\nonumber\\
				+&e^{2kt}\tau\big[2kr^T(t)\mathscr{M}_2r(t)+2r^T(t)\mathscr{M}_2\dot{r}(t)\big]-\frac{\dot{h}(t)}{h}e^{2kt}r^T(t)\mathscr{M}_2r(t)\nonumber\\
				 \leq & e^{2kt}\chi^T(t)\big\{\delta sym(k\upsilon_1\mathscr{M}_1\upsilon_1^T+\upsilon_1\mathscr{M}_1\upsilon_v^T)+\tau sym(k\upsilon_1\mathscr{M}_2\upsilon_1^T+\upsilon_1\mathscr{M}_2\upsilon_v^T)+\frac{\mu}{h}\upsilon_1(\mathscr{M}_1{-\mathscr{M}_2})\upsilon_1^T\big\}\chi(t)\nonumber\\
			 = &  e^{2kt}\chi^T(t)\{\varDelta_5+\delta\varSigma_2+\tau\psi_2\}\chi(t).\nonumber
				\end{align}
			We apply the innovative inequalities presented in Lemma \ref{L2}, Corollary \ref{C4} and Lemma \ref{WRCI} to estimate \( \dot{V}_3 \) and  \( \dot{V}_4 \). Accordingly, we express:
				\begin{equation}
					\begin{aligned}
						\dot{V}_3(r(t))  =&  e^{2kt}\bigg[h^2\dot{r}^T(t)(\mathcal{Z}_1+\mathcal{Z}_3+\mathcal{Z}_4)\dot{r}(t)+h^2r^T(t)Z_2r(t)
						-h\int_{t-h}^{t}e^{2k(s-t)}r^T(s)\mathcal{Z}_2r(s)ds\nonumber\\ &-h\int_{t-h}^{t}e^{2k(s-t)}\dot{r}^T(s)\mathcal{Z}_3\dot{r}(s)ds
						-h\int_{t-h}^{t}e^{2k(s-t)}\dot{r}^T(s)\mathcal{Z}_1\dot{r}(s)ds\nonumber\\
						&-h\int_{t-h}^{t}e^{2k(s-t)}\dot{r}^T(s)\mathcal{Z}_4\dot{r}(s)ds\bigg]
					\end{aligned}
				\end{equation}
			 By simple calculation and bounded by inequality (\ref{e10}), we can get the following inequality. 
			\begin{align}
				&-h\int_{t-h}^{t}e^{2k(s-t)}\dot{r}^T(s)\mathcal{Z}_1\dot{r}(s)ds\notag\\
				& =-h \int_{t-h(t)}^t e^{2 k(s-t)} \dot{r}^{T}(s) \mathcal{Z}_1 \dot{r}(s) d s-h \int_{t-h}^{t-h(t)} e^{2 k(s-t)} \dot{r}^{T}(s) \mathcal{Z}_1 \dot{r}(s) d s \notag\\
				& \leq-h e^{2 k(-h(t))} \int_{t-h(t)}^t r^{T}(s) \mathcal{Z}_1 r(s) d s-h e^{2 k(-h)} \int_{t-h}^{t-h(t)} \dot{r}^{T}(s) \mathcal{Z}_1 r(s) d s\notag \\
				& \leq-h e^{-2 k h} \bigg(\int_{t-h(t)}^t r^{T}(s) \mathcal{Z}_1 \dot{r}(s) d s+ \int_{t-h}^{t-h(t)} r^{T}(s) \mathcal{Z}_1 \dot{r}(s) d s\bigg)\notag \\
				& \leq e^{-2kh}\chi^T(t)\bigg\{\frac{1}{\delta}\varGamma(1)\mathcal{Z}_{13}\varGamma^T(1)+\frac{1}{\tau}\varGamma(2)\mathcal{Z}_{13}\varGamma^T(2)\bigg\}\chi(t).
			\end{align}		

		 Utilizing Lemma \ref{C4} and focusing on  $\hat{m}=2$, the following results can be obtained. 
			{\footnotesize 
					\begin{align*}
						-h\int_{t-h}^{t}&e^{2k(s-t)}r^T(s)\mathcal{Z}_2r(s)ds \nonumber\\
						 \leq &-\frac{h}{\langle g_0,g_0\rangle_w}\left(\int_{t-h}^t r(s) d s\right)^T \mathcal{Z}_2\left(\int_{t-h}^t r(s) d s\right) \\
						 &-\frac{h}{\left\langle g_1, g_1\right\rangle_w}\left(\int_{t-h}^t(s+\bar{k}) r(s) d s\right)^T \mathcal{Z}_2\left(\int_{t-h}^t(s+\bar{k}) r(s) d s\right) \\
						& -\frac{h}{\left\langle g_2, g_2\right\rangle_w}\left(\int_{t-h}^t\left(s^2+c s+m\right) r(s) d s\right)^T \mathcal{Z}_2\left(\int_{t-h}^t\left(s^2+c s+m\right) r(s) d s\right) \\
						=&-\frac{h}{\langle g_0,g_0\rangle_w}\left(\int_{t-h}^t r(s) d s\right)^T \mathcal{Z}_2\left(\int_{t-h}^t r(s) d s\right) \\
						&-\frac{h}{\left\langle g_1, g_1\right\rangle_w}\left((-h+\bar{k})\int_{t-h}^t r(s) d s+\int_{0}^t\int_{t-h}^tr(s)dsdu\right)^T \mathcal{Z}_2\left((-h+\bar{k})\int_{t-h}^t r(s) d s+\int_{0}^t\int_{t-h}^tr(s)dsdu\right) \\
						&-\frac{h}{\left\langle g_2, g_2\right\rangle_w}\left((h^2-ch+m)\int_{t-h}^t r(s) d s+(-2h+c)\int_{0}^t\int_{t-h}^tr(s)dsdu+2\int_{0}^t\int_{0}^s\int_{t+u}^tr(v)dvduds\right)^T \\
						&\times \mathcal{Z}_2\left((h^2-ch+m)\int_{t-h}^t r(s) d s+(-2h+c)\int_{0}^t\int_{t-h}^tr(s)dsdu+2\int_{0}^t\int_{0}^s\int_{t+u}^tr(v)dvduds\right)\\
					\end{align*}}
				For this term, by Corollary \ref{C4} and take $\hat{m}$=3
				{\footnotesize 
					\begin{align*}
						-h\int_{t-h}^{t}&e^{2k(s-t)}\dot{r}^T(s)\mathcal{Z}_3\dot{r}(s)ds\nonumber\\
						& \leqslant-\frac{h}{\left\langle g_0, g_0\right\rangle_w}\left(r(t)-r(t-h)^{T} \mathcal{Z}_3(r(t)-r(t-h)\right) \\
						& -\frac{h}{\left\langle g_1, g_1\right\rangle_\omega}\left(\bar{k} r(t)-(\bar{k}-h) r(t-h)-\int_{t-h}^t r(s) d s\right)^{T} \mathcal{Z}_3\left(\bar{k} r(t)-(\bar{k}-h) r(t-h)-\int_{t-h}^t r(s) d s\right) \\
						&-\frac{h}{\left\langle g_2, g_2\right\rangle_w} \left(m r(t)-\left(h^2-c h+m\right) r(t-h)-(c-2 h) \int_{t-h}^t r(s) d s-2 \int_{-h}^0 \int_{t+u}^t r(u) d u d s\right)^T\\
						&\times \mathcal{Z}_3\left(m r(t)-\left(h^2-c h+m\right) r(t-h)-(c-2 h) \int_{t-h}^t r(s) d s-2 \int_{-h}^0 \int_{t+u}^t r(u) d u d s\right)\\
						& -\frac{h}{\left\langle g_3, g_3\right\rangle_\omega}\left(r r(t)-\left(-h^3+\hbar h^2-q h+r\right) r(t-h)-\left(3 h^2-2 h \hbar+q\right)\int_{t-h}^{t}r(s) d s \right.\\
						&\quad \quad\quad\quad\quad\quad \left.-(2 \hbar-6 h) \int_{-h}^0 \int_{1+u}^t r(u) d u d s-6 \int_{-h}^0 \int_s^0 \int_{t+u}^t r(v)d v d u d s\right)^{T}\\
						&\times \mathcal{Z}_3\left(r r(t)-\left(-h^3+\hbar h^2-q h+r\right) r(t-h)-\left(3 h^2-2 h \hbar+q\right)\int_{t-h}^{t}r(s) d s \right.\\
						&\quad \quad\quad\quad\quad\quad \left.-(2 \hbar-6 h) \int_{-h}^0 \int_{1+u}^t r(u) d u d s-6 \int_{-h}^0 \int_s^0 \int_{t+u}^t r(v)d v d u d s\right)
					\end{align*}}
				For the last term of $\dot{V}_3(r(t))$, by out novel weighted reciprocally convex inequality Lemma \ref{WRCI}
				\begin{align*}
					&-h \int_{t-h}^t e^{2 k(s-t)} \dot{r}(s) \mathcal{Z}_4 \dot{r}(s) d s \\
						& =-h e^{2 k(-h(t))} \int_{t-h}^{t-h(t)} e^{2 k(s+h(t)-t)} \dot{r}(s) \mathcal{Z}_4 \dot{r}(s) d s  -h \int_{t-h(t)}^t e^{2 k(s-t)} \dot{r}^{T}(s) \mathcal{Z}_4 \dot{r}(s) d s \\
						& \leq-\frac{ 2 kh}{e^{2 k h}-e^{2 k h(t)}}(r(t-h(t))-r(t-h))^{T} \mathcal{Z}_4(r(t-h(t)-r(t-h)) \\
						& -\frac{ 2 kh}{e^{2 k h(t)}-1}(r(t)-r(t-h(t))^{T} \mathcal{Z}_4(r(t)-r(t-h(t)) \\
						& =-\frac{  2 kh}{e^{2 k h}-1}\left\{\left[1+\frac{e^{2 kh(t)}-1}{e^{2 k h}-e^{2 k h(t)}}\right](r(t-h(t)-r(t-h)))^T\mathcal{Z}_4(r(t-h(t)-r(t-h))) \right.\\						
						& \left.+\left[1+\frac{e^{2 k h}-e^{2 k h(t)}}{e^{2 k h(t)}-1}\right](r(t)-r(t-h(t)))^T \mathcal{Z}_4(r(t)-r(t-h(t)))\right\}\\
						&\leq-\frac{2kh}{e^{2 k h}-1}\left[\begin{array}{l}
							r_{(1)} \\
							r_{(2)}
						\end{array}\right]^{\top}\left[\begin{array}{ccc}
							 \mathcal{Z}_4 & \mathcal{S}_4 \\
						      * & \mathcal{Z}_4
						\end{array}\right]\left[\begin{array}{l}
							r_{(1)} \\
							r_{(2)}
						\end{array}\right] \\
						&=\chi^T(t)\varPsi_2\chi(t)
				\end{align*}
				Consequently
				\setlength\arraycolsep{2pt}
				\begin{eqnarray}\nonumber
					\dot{V}_3(r(t)) & \leq & e^{2kt}\chi^T(t)\Bigg\{\varDelta_3+\varPsi_2-e^{-2kh}\bigg[\frac{1}{\delta}\gamma(1)\mathcal{Z}_{13}\gamma^T(1)
					+\frac{1}{\tau}\gamma(2)\mathcal{Z}_{13}\gamma^T(2)\bigg]\Bigg\}\chi(t).\nonumber
				\end{eqnarray}
				For $\dot{V}_4(r(t)) $, using inequality (\ref*{e10}) to deal with last two terms,  other steps are same with \cite{LiuX}
			\begin{align}
				\dot{V}_4(r(t))  = & \frac{h^2}{2}e^{2kt}\dot{r}^T(t)(\mathscr{N}_1+\mathscr{N}_2)\dot{r}(t)-\int_{-h}^{0}\int_{t+u}^{t}e^{2ks}\dot{r}^T(s)\mathscr{N}_1\dot{r}(s)dsdu\nonumber\\
				& -\int_{-h}^{0}\int_{t-h}^{t+u}e^{2ks}\dot{r}^T(s)\mathscr{N}_2\dot{r}(s)dsdu\nonumber\\
				\leq & \frac{h^2}{2}e^{2kt}\dot{r}^T(t)(\mathscr{N}_1+\mathscr{N}_2)\dot{r}(t)-e^{2k(t-h)}\int_{-h}^{0}\int_{t+u}^{t}\dot{r}^T(s)\mathscr{N}_1\dot{r}(s)dsdu\nonumber\\
				& -e^{2k(t-h)}\int_{-h}^{0}\int_{t-h}^{t+u}\dot{r}^T(s)\mathscr{N}_2\dot{r}(s)dsdu\nonumber\\
				\leq& e^{2kt}\chi^T(t)\Bigg\{\varDelta_4-e^{-2kh}\bigg[\left(\frac{1}{\delta}-1\right) h(t) \int_{t-h(t)}^t z^T(s) N_1 \dot{z}(s) d s\notag\\
				&+\left(\frac{1}{\tau}-1\right)(h-h(t)) \int_{t-h}^{t-h(t)} z^T(s) N_2 z(s) d s\bigg]\Bigg\}\chi(t)\notag	\end{align}
			The last two terms can be bounded by (\ref{e10}) as follow 
		\begin{align}
				\dot{V}_4(r(t))\leq & e^{2kt}\chi^T(t)\Bigg\{\varDelta_4-e^{-2kh}\bigg[\Big(\frac1\delta-1\Big)\varGamma(1)\mathscr{N}_{14}\varGamma^T(1)+\Big(\frac1\tau-1\Big)\varGamma(2)\mathscr{N}_{15}\varGamma^T(2)\bigg]\Bigg\}\chi(t)\nonumber.
			\end{align}
				By taking into account the assumptions in {\eqref{a7}} at \( r(t) \) and \( r(t-h(t)) \), for any positive diagonal matrices \( R_1 \) and \( R_2 \), we obtain:
				{\setlength\arraycolsep{2pt}
					\begin{eqnarray}\label{R-inequality}
						0 & \leq & 2e^{2kt}[r^T(t)L\mathscr{R}_1{f}(r(t))-f^T(r(t))\mathscr{R}_1f(r(t))
						\nonumber\\		& & +r^T(t-h(t)) \mathscr{L}\mathscr{R}_2{f}(r(t-h(t)))-f^T(r(t-h(t)))\mathscr{R}_2f(r(t-h(t)))]\nonumber\\
						& = & e^{2kt}\chi^T(t)\varPi\chi(t).
				\end{eqnarray}}
				By {lemma \ref{L7}}, we have
				{\setlength\arraycolsep{2pt}
					\begin{eqnarray}
						& -\chi^T(t)\Bigg\{\frac{1}{\delta}\varGamma(1)\mathcal{Z}_{1}\varGamma^T(1)+\frac{1}{\tau}\varGamma(2)\mathcal{Z}_{1}\varGamma^T(2) +\frac{1}{\delta}\varGamma(1)\mathscr{N}_{14}\varGamma^T(1)\nonumber\\
						&+\frac{1}{\tau}\varGamma(2)\mathscr{N}_{15}\varGamma^T(2) -\varGamma(1)\mathscr{N}_{14}\varGamma^T(1)-\varGamma(2)\mathscr{N}_{15}\varGamma^T(2)\Bigg\}\chi(t)\nonumber\\
						&\leq  \chi^T(t)\big\{-e^{-2kh}\varGamma\varOmega\varGamma^T\big\}\chi(t)=\chi^T(t)\varPsi_1\chi(t).\nonumber
				\end{eqnarray}}
				Therefore, we have 
				\[
				\dot{V}(r(t)) \leq e^{2kt} \chi^T(t) \{ \varSigma + \delta \varTheta_1 + \tau \varTheta_2 \} \chi(t).
				\]
				Given that \( \varSigma + \varTheta_1 < 0 \) and \( \varSigma + \varTheta_2 < 0 \), along with \( \delta + \tau = 1 \), it follows that \( \varSigma + \delta \varTheta_1 + \tau \varTheta_2 < 0 \). Thus, for any \( \chi(t) \neq 0 \), we conclude that \( \dot{V}(r(t)) < 0 \).
				Thus,
				$$V(r(0))\leq\Lambda\|\phi\|^2,$$
				and\\
				{\setlength\arraycolsep{2pt}
					\begin{eqnarray}
						\Lambda & = & \lambda_{max}(\mathscr{P})(1+2h^2)+2\lambda_{max}(\mathscr{D}_1\mathscr{L})+2\lambda_{max}(\mathscr{D}_2\mathscr{L})+he^{2kh}\lambda_{max}(\mathscr{Q})\nonumber\\
						& & \times[1+\lambda_{max}(\mathscr{L}^2)]+he^{2kh}(\lambda_{max}(\mathscr{U}_1)+\lambda_{max}(\mathscr{U}_2)+\lambda_{max}(\mathscr{U}_3))\nonumber\\
						& & +\Bigg[\frac{3h^3}{2}\lambda_{max}(\mathcal{Z}_1)+\frac{3h^3}{2}\lambda_{max}(\mathcal{Z}_3)+\frac{3h^3}{2}\lambda_{max}(\mathcal{Z}_4)+\frac{h^3}{6}\lambda_{max}(\mathscr{N}_1)+\frac{h^3}{2}\lambda_{max}(\mathscr{N}_2)\Bigg]\nonumber\\
						& &
						\times \big[\lambda_{max}(K_0^TK_0)+\lambda_{max}(K_1^TK_1)\lambda_{max}(\mathscr{L}^2)+\lambda_{max}(K_2^TK_2)\lambda_{max}(\mathscr{L}^2)\big]\nonumber\\
						& &
						+h\lambda_{max}(\mathscr{M}_1+\mathscr{M}_2)+\frac{h^3}{2}\lambda_{max}(\mathcal{Z}_2).\nonumber
				\end{eqnarray}}
			Simultaneously, we obtain:
				$$V(r(t))\geq e^{2kt}\delta^T(t)\mathscr{P}\delta(t)\geq e^{2kt}\lambda_{min}(\mathscr{P})\|\delta(t)\|^2\geq e^{2kt}\lambda_{min}(\mathscr{P})\|r(t)\|^2.$$
				Therefore,
				$$\|r(t)\|\leq\sqrt{\frac{\Lambda}{\lambda_{max}(\mathscr{P})}}\|\phi\|e^{-kt},$$
				{the proof is concluded.}
			\end{proof}
			%

			\bigskip
			\section{Numerical example}
		We now present two examples along with their simulations to show that advantages of the results obtained.\\
			{\bf Example 1}  {\rm\cite{WuM,JiM1,JiM2,LiuX,HY2023,SCY}} Consider the NNs time delayed system \eqref{a6} with:
			\setlength{\abovedisplayskip}{10pt}
			\begin{equation}\nonumber
			\begin{array}{l}
				K_1=\left[\begin{matrix}
					-1& 0.5 \\
					0.5& -1
				\end{matrix}\right],\ \
				K_2=\left[
				\begin{matrix}
					-0.5& 0.5 \\
					0.5 & 0.5
				\end{matrix}
				\right],\ \ K_0=\diag\{2,3.5\},
				\ \ \mathscr{L}_1=1,\ \ \mathscr{L}_2=1.
			\end{array}
			\end{equation}
			
			
			For different values of \( \mu \) and \( h = 1 \), the maximum allowable exponential convergence rate \( k \) for the system is documented in Table \ref{table1}. The table indicates that our criterion is more effective than those presented in \cite{WuM, JiM1, JiM2,LIUYY,LiuX,HY2023,zck2023,BINYY,SCY,2022}. The NoDVs generated by the new method are significantly smaller than those in \cite{LIUYY, HY2023, zck2023, BINYY, 2022}, while also reducing conservatism in the exponential stability of NNs systems \eqref{a6}.
			

			
			\begin{table}[H]
			\caption{Allowable values of $k$ for different $\mu$ and $h=1$ (Example 1).}\label{table1}
			\setlength{\tabcolsep}{7mm}
			\centering
			\begin{tabular}{lcccccc}
				\hline
				$\mu$    &  0.8   &0.9 & NoDVs  \\ \hline
				\cite{WuM} & 0.8643& 0.8344 &  $3n^2+12n$  \\
				\cite{JiM1}& 0.8696& 0.8354 & $13n^2+6n$  \\
				\cite{JiM2} & 0.8784& 0.8484& $7n^2+8n$ \\
				\cite{LIUYY}  &0.9341& 0.9057 & ${96n^2 + 10n}$ \\
				\cite{LiuX}  &0.9382& 0.9104 & ${20.5n^2 + 12.5n}$ \\
				\cite{BINYY}	Theorem 1  &0.9770 &0.9450 &$181.5n^2 + 11.5n$  \\  
				\cite{BINYY}	Theorem 2  &0.9283 &0.8989 &${141.5n^2 + 11.5n}$  \\  
				\cite{HY2023} &1.0024 &0.9696& ${189.5n^2 + 12.5n}$  \\
				\cite{zck2023} &1.0082 &0.9745& ${306n^2 + 9.5n}$  \\  
				\cite{SCY}  &1.0889 &1.0732&  ${20.5n^2 + 11.5n}$  \\ 
					\cite{2022} &1.0852 &1.0810& ${173n^2 + 16n}$  \\ 
					Theorem \ref{Theorem1}  &1.2549 &1.2300 &${29n^2 + 12n}$  \\  \hline
			\end{tabular}
			\end{table}
			
			{\bf Example 2}   Consider the NNs time-varying delayed system \eqref{a6} with the following matrices and compared with  {\cite{WuM,ZhengC,JiM1,JiM2,LIUYY,2020,LiuX}}:
			\setlength{\abovedisplayskip}{10pt}
			\begin{equation}\nonumber
			\begin{array}{l}
				K_1=\left[\begin{matrix}
					-0.0373& 0.4852& -0.3351&0.2336 \\
					-1.6033&0.5988&-0.3224&1.2352\\
					0.3394& -0.0860&-0.3824& -0.5785\\
					-0.1311&0.3253&-0.9534&-0.5015
				\end{matrix}\right],\ \
				K_2=\left[
				\begin{matrix}
					0.8674&-1.2405&-0.5325&-0.0220\\
					0.0474&-0.9164&0.0360&0.9816\\
					1.8495&2.6117&-0.3788&0.0824\\
					-2.0413&0.5179&1.1734&-0.2775
				\end{matrix}
				\right],\\
				
				\ \ K_0=\diag\{1.2769, 0.6231, 0.9230, 0.4480\},\\
				\ \ \mathscr{L}_1=0.1137,\ \ \mathscr{L}_2=0.1279,\ \ \mathscr{L}_3=0.7994,\ \ \mathscr{L}_4=0.2368.
			\end{array}
			\end{equation}
			
		In this example, following the approach in \cite{LiuX}, we compare our method with those proposed in \cite{WuM, ZhengC, JiM1, JiM2, LIUYY, 2020, LiuX} by setting \( k = 10^{-3} \). The maximal upper bounds of \( h(t) \) along with the corresponding NoDVs for different values of \( \mu \) are presented in Table \ref{table2}. The results demonstrate the enhancements achieved by our method.

			
			{Figure \ref{figure2} illustrates the trajectory of the delayed system \eqref{a6} with the initial condition $r(0)=[-1,-0.5,0.5,1]^T,$ $ h(t)=2.4+0.9sin(t),$ $f(r(t))=[0.1137tanh(r_1(t)),0.1279tanh(r_2(t)),\\ 0.7994tanh(r_3(t)),0.2368tanh(r_4(t))].$ }
			
			
			\begin{table}[H]
			\caption{Allowable $h$ for various $\mu$  (Example 2).}\label{table2}
			\setlength{\tabcolsep}{6mm}
			\centering
			\begin{tabular}{lcccccc}
				\hline
				$\mu$ &0.5   &  0.8   &0.9 & NoDVs  \\ \hline
				\cite{WuM} &2.5379 &2.1766& 2.0853& $3n^2+12n$  \\
				\cite{ZhengC}&2.6711& 2.2977 &2.1783& $4.5n^2 + 17.5n$  \\
				\cite{JiM1}&3.4311& 2.5710& 2.4147 & $13n^2+6n$  \\
				\cite{JiM2}&3.6954& 2.7711 &2.5795&  $7n^2+8n$ \\
				\cite{LIUYY}&-& 2.9911 &2.5300&  ${125.5n^2 + 10.5n}$ \\
				\cite{2020}&-& 3.0408 &2.6611&  ${99n^2 + 10n}$ \\
				\cite{LiuX}Theorem 3.1&3.8709& 3.3442 &3.1291&  ${20.5n^2 + 12.5n}$ \\
			
				Theorem of \ref{Theorem1} &4.0200 &3.6000 &3.3000 &${29n^2 + 12n}$  \\  \hline
			\end{tabular}
			
			\end{table}
			\begin{figure}[htb!]
				\setlength{\unitlength}{1cm} 
				\begin{center}
					\resizebox{!}{8cm}{\includegraphics{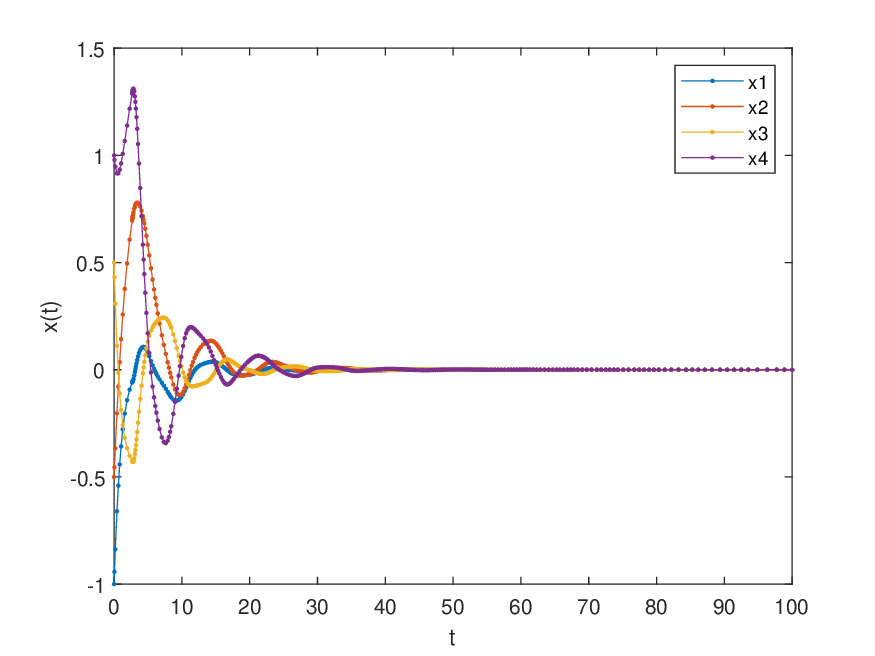}}
					\vspace{-0.1cm}
					\caption{  Trajectory  of Example 2.}\label{figure2}
				\end{center}
			\end{figure}

			\section{Conclusion}
			Using the new weighted reciprocally convex inequality and auxiliary function inequalities, we studied the exponential stability of neural network systems with time delays. By applying the improved LKF, using some new inequities to estimate the derivative terms of the LKF, we obtained enhanced stability criteria. This approach allows for ADUB to be achieved while ensure that number of decision variables increases are minimized or even reduced. The effectiveness of the new method is validated through numerical results.


\begin{thebibliography}{99}
				\bibitem{S1}S. Arik, Stability analysis of delayed neural networks,  IEEE Transactions on Circuits and Systems I: Fundamental Theory and Applications  47 (2000) 1089-1092.
				\bibitem{WM}M. Wu, F. Liu, P. Shi, Y. He, R. Yokoyama, Exponential Stability Analysis for Neural Networks With Time-Varying Delay, IEEE Transactions on Systems, Man, and Cybernetics, Part B (Cybernetics) 38 (2008) 1152-1156.
				\bibitem{ChenW} W.H. Chen, W.X. Zheng, Improved delay-dependent asymptotic stability criteria
				for delayed neural networks, IEEE Trans. Neural Netw. 19 (2008) 2154-2161.
				\bibitem{HeY1} Y. He, Q.G. Wang, M. Wu, LMI-based stability criteria for neural networks with
				multiple time-varying delays, Physica D: Nonlinear Phenomena, 212 (2005) 126-136.
				\bibitem{HuaC} C.C. Hua, C.N. Long, X.P. Guan, New results on stability analysis of neural
				networks with time-varying delays, Phys. Lett. A 352 (2006) 335-340.
				\bibitem{HBZ}H.B. Zeng, Y. He, M. Wu, J. She
				Stability analysis of generalized neural networks with time-varying delays via a new integral inequality,
				Neurocomputing 161 (2015)  148-154.
				
				\bibitem{ZYL}Z.Y. Liu, Y. He, L. Jin,
				Improved stability criteria of neural networks with time-varying delay based on the variable-augmented-based free-weighting-matrices,
				Journal of the Franklin Institute 361 (2024)
				106871.
				\bibitem{OM} O.M. Kwon, J.H. Park, S.M. Lee, Exponential stability for uncertain dynamic systems with time-varying delays: LMI optimization approach, Journal of Optimization Theory and Applications 137 (2008) 521-532.
			
				\bibitem{Zeng}	H.B. Zeng, H.C. Lin, Y. He, K.L. Teo, W. Wang,
			Hierarchical stability conditions for time-varying delay systems via an extended reciprocally convex quadratic inequality,
			Journal of the Franklin Institute 357 (14)
			(2020) 9930-9941.
				
				
				\bibitem{Levan1}L.V. Hien, H. Trinh, New finite-sum inequalities and their applications to discrete-time delay systems, {Automatica 71 (2016) 197-201.}
				
					
				\bibitem{Hien2}L.V. Hien, H. Trinh, Refined Jensen-based inequality approach to stability analysis of time-delay systems, IET Control Theory Appl. 9 (2015) 2188-2194.
				\bibitem{As} A. Seuret, F. Gouaisbaut, E. Fridman, Stability of systems with fast-varying delay using improved Wirtinger's inequality, 52nd IEEE Conference on Decision and Control (2013) 946-951.
				
				\bibitem{Seuret}A. Seuret, F. Gouaisbaut,
				Hierarchy of LMI conditions for the stability analysis of time-delay systems. Systems $\&$ Control Letters 81 (2015) 1-7.
				\bibitem{Park}P.G. Park, J.W. Ko, C.K. Jeong,
				Reciprocally convex approach to stability of systems with time-varying delays,	Automatica 47(1)(2011) 235-238.
				\bibitem{Vong} S.W. Vong, C.Y. Shi, D.D Liu, Improved exponential stability criteria of time-delay systems via weighted integral inequalities, Applied Mathematics Letters 86 (2018) 14-21.
				
				
				\bibitem{WuM} M. Wu, F. Liu, P. Shi, Y. He, R. Yokoyama, Exponential stability analysis for neural
				networks with time-varying delay, IEEE Trans. Syst. Man Cybern. B Cybern. 38
				(2008) 1152-1156.
					\bibitem{ZhengC} C.D. Zheng, H. Zhang, Z. Wang, New delay-dependent global exponential stability
				criteria for cellular-type neural networks with time-varying delays, IEEE Trans.
				Circuits Syst. II 56 (2009) 250-254.
				\bibitem{JiM1} M.D. Ji, Y. He, M. Wu, C.K. Zhang, New exponential stability criterion for neural
				networks with time-varying delay, in: Proceedings of the 33rd Chinese Control
				Conference, Nanjing, China, 2014.
				\bibitem{JiM2} M.D. Ji, Y. He, M. Wu, C.K. Zhang, Further result on exponential stability of neural
				network with time-varying delay, Appl. Math. Comput. 256 (2015) 175-182.
				\bibitem{LIUYY}{Y.J. Liu, J.H. Park, F. Fang, Global Exponential Stability of Delayed Neural
					Networks Based on a New Integral Inequality
					IEEE Trans. Syst. Man Cybern. Systems. 49 (2019) 2318-2325. }
					\bibitem{2020}X.J. Peng, Y. He, F. Long, M. Wu,
					Global exponential stability analysis of neural networks with a time-varying delay via some state-dependent zero equations,
					Neurocomputing, 399 (2020) 1-7.
					
					\bibitem{LiuX}X.F. Liu, X.G. Liu, M.L. Tang, F.X. Wang, Improved exponential stability criterion for neural networks with time-varying delay, Neurocomputing 234 (2017) 154-163.
					Neurocomputing 173 (2016) 1017–1027.
				\bibitem{HY2023}X.K. Chang, Y. He, Z.M. Gao,
				Exponential stability of neural networks with a time-varying delay via a cubic function negative-determination lemma,
				Applied Mathematics and Computation,
			 438 (2023) 127602.
			 \bibitem{zck2023}W.X. Zhao, Y.F. Liu, Y.L. Fan, C.K. Zhang, Exponential Stability Analysis for Delayed Neural Networks via A Cubic Function Negative-Determination Lemma, Chinese Control Conference (CCC), 2023 42nd  Tianjin, China (2023) 180-185.
				\bibitem{BINYY}{B. Yang, M. Hao, J. Cao, X. Zhao, Delay-dependent global exponential stability for neural networks with time-varying delay, Neurocomputing 338 (2019) 172-180.
				}
					
				\bibitem{SCY}C.Y. Shi, K.C. Hoi, S.W. Vong, Exponential stability for time-delay neural networks via new weighted integral inequalities, Numerical Algebra, Control and Optimization, (2023).
				\bibitem{2022}Q. Chen , X. Liu,  X.M. Li,  Further improved global exponential stability result for neural networks with time-varying delay. Neural Comput. $\&$ Applic. 34 (2022), 2115–2132.
				
				
				
				
				
				
				
				
				
				
				
				
			\end{thebibliography}
			\end{document}